\numberwithin{equation}{section}
\newtheorem*{theorem*}{Theorem}
\newtheorem*{corollary*}{\bf Corollary}
\newtheorem*{remark*}{\bf Remark}
\newtheorem{theorem}{Theorem}[section]
\newtheorem{lemma}[theorem]{Lemma}
\title[Torus quotient of Richardson varieties]
{Torus quotient of Richardson varieties in Orthogonal and Symplectic Grassmannians}
\newtheorem{proposition}[theorem]{Proposition}
\title[Torus quotient of Richardson varieties]
{Torus quotient of Richardson varieties in Orthogonal and Symplectic Grassmannians}
 \author{Arpita Nayek}
\address{%
Arpita Nayek\\
Department of Mathematics and Statistics \\
Indian Institute of Technology, Kanpur\\
Kanpur-208016 \\
India\\
Email: anayek@iitk.ac.in\\
}
\author{S.K. Pattanayak}
\address{%
S.K. Pattanayak\\
Department of Mathematics and Statistic\\
Indian Institute of Technology, Kanpur\\
Kanpur-208016\\
India\\
Email:santosha@iitk.ac.in\\
}
\subjclass[2010]{14F15; 20G05; 22E45}   
\begin{document}
\maketitle
\begin{abstract}
For any simple, simply connected algebraic group $G$ of type $B,C$ and $D$ and for any maximal parabolic subgroup $P$ of $G$, we provide a criterion for a Richardson variety in $G/P$ to admit semistable points for the action of a maximal torus $T$ with respect to an ample line bundle on $G/P$. 
\end{abstract}

{\bf Keywords:} Schubert variety, Richardson variety, Semi-stable point, Line bundle. \\

2010 Mathematics Subject Classification: 14F15; 20G05; 22E45.

\section{Introduction}\label{s.Introduction}

For the action of a maximal torus $T$ on the Grassmannian $G_{r,n}$, the GIT quotients have been studied by several authors. In \cite{HK} Hausmann and Knutson identified the GIT quotient of the Grassmannian $G_{2,n}$ by the natural action of 
the maximal torus with the moduli space of polygons in $\mathbb R^3$ and this GIT quotient can also be realized as 
the GIT quotient of an $n$-fold product of projective lines by the diagonal action of $PSL(2, \mathbb C)$. In the 
symplectic geometry literature these spaces are known as polygon spaces as they parameterize the $n$-sides polygons 
in $\mathbb R^3$ with fixed edge length up to rotation. More generally, $G_{r,n}//T$ can be identified with the GIT 
quotient of $(\mathbb P^{r-1})^{n}$ by the diagonal action of $PSL(r, \mathbb C)$ called the Gelfand-MacPherson 
correspondence. In \cite{Kap1} and \cite{Kap2} Kapranov studied the Chow quotient of the Gassmannians 
and he showed that the Grothendieck-Knudsen moduli space $\overline{M}_{0,n}$ of stable $n$-pointed curves of genus zero 
arises as the Chow quotient of the maximal torus action on the Grassmannian $G_{2,n}$.

Let $G$ be a simply connected semi-simple algebraic group over an algebraically closed field $K$. Let $T$ be a maximal torus of $G$ and $B$ be a Borel subgroup of $G$ containing $T$. In \cite{kannan1} and \cite{kannan2}, the parabolic subgroups $P$ of $G$ containing $B$ are described for which there exists an ample line bundle $\mathcal{L}$ on $G/P$ such that the semistable points $(G/P)^{ss}_{T}(\mathcal{L})$ are the same as the stable points $(G/P)^{s}_{T}(\mathcal{L})$. In \cite{STR} Strickland reproved these results. 

In \cite{KS}, when $G$ is of type $A$, $P$ is a maximal parabolic subgroup of $G$ and $\mathcal{L}$ is the ample generator of the Picard group of $G/P$, it is shown that there exists unique minimal Schubert variety $X(w)$ admitting semistable points with respect to $\mathcal{L}$. For other types of classical groups the minimal Schubert varieties admitting semistable points were described in \cite{KP} and \cite{santosh}.

A Richardson variety $X_{w}^{v}$ in $G/P$ is the intersection of the Schubert variety $X_{w}$ in $G/P$ with the opposite Schubert variety $X^{v}$ therein. For $G=SL_n$ and $P$ a maximal parabolic in $G$ a criterion for the Richardson varieties in $G/P$ to have nonempty semistable locus with respect to an ample line bundle $\mathcal{L}$ on $G/P$ is given in \cite{KPPU}. In this paper, we give a criterion for a Richardson variety in $G/P$ to have nonempty semistable locus with respect to the action of a maximal torus $T$ on $G/P$, where $G$ is of type $B$, $C$ and $D$ and $P$ is a maximal parabolic subgroup in $G$.

The organisation of the paper is as follows. Section 2 consists of preliminary notions and some terminologies from algebraic groups and Geometric invariant theory. Section 3 gives a necessary condition for a Richardson variety to admit a semistable point. In section 4 we give a sufficient condition for a Richardson variety in type $B$ and $C$ to admit a semistable point and in section 5 a sufficient condition is given for type $D$.  

\section{Preliminaries and notation}\label{s.Prel-notation}
 In this section, we set up some notation and preliminaries. We refer to \cite{carter}, \cite{hum1}, \cite{hum2} and \cite{spr} for preliminaries in Lie algebras and algebraic groups. Let $G$ be a semi-simple algebraic group over an algebraically closed field $K$. We fix a maximal torus $T$ of $G$ and a Borel subgroup $B$ of $G$ containing $T$. Let $U$ be the unipotent radical of $B$. Let $N_{G}(T)$ (respectively, $W=N_{G}(T)/T$)  be the normalizer of $T$ in $G$  (respectively, the Weyl group of $G$ with respect to $T$).  Let $B^{-}$ be  the Borel subgroup of $G$ opposite to $B$ determined by $T$. We denote by $R$ the set of roots with respect to $T$ and we denote by $R^{+}$ the set of positive roots with respect to $B$. Let $U_{\alpha}$ denote the one-dimensional $T$-stable subgroup of $G$ corresponding to the root $\alpha$ and we denote $U_{\alpha}^*$ by the open set $U_{\alpha} \setminus \{identity\}$. Let $S=\{\alpha_1,\ldots,\alpha_l\}\subseteq R^{+}$ denote the set of simple roots and for a subset $I\subseteq S$ we denote by $P_I$  the parabolic subgroup of $G$ generated by $B$ and $\{n_{\alpha}: \alpha \in I^{c}\}$, where $n_{\alpha}$ is a representative of $s_{\alpha}$ in $N_{G}(T)$. Let $W^{I}=\{w\in W: w(\alpha)\in R^{+} \,\,  \mbox{for each} \,\, \alpha \in I^c\}$ and $W_{I}$ be the subgroup of $W$ generated by the simple reflections $s_{\alpha}$, $\alpha\in I^c$. Then every $w\in W$ can be uniquely expressed as $w=w^{I}.w_{I}$, with $w^{I}\in W^{I}$ and $w_{I}\in W_{I}$. Denote by $w_{0}$ the longest element of $W$ with respect to $S$. Let $X(T)$ (respectively,  $Y(T)$) denote the  group of all  characters of $T$ ( respectively,  one-parameter subgroups of $T$ ). Let $E_{1}:=X(T)\otimes\mathbb{R}$  and $E_{2}=Y(T)\otimes\mathbb{R}$. Let $\langle .,.\rangle:E_{1}\times E_{2}\rightarrow\mathbb{R}$ be the canonical non-degenerate bilinear form. Let $\{ \lambda_{j}: j=1,2, \cdots l \}\subset  E_{2}$  be the basis of $E_{2}$ dual to $S$. That is, $\langle\alpha_{i},\lambda_{j}\rangle=\delta_{ij}$ for all $1\leq i, ~ j\leq l$. Let $\bar{C}:=\{\lambda\in E_{2}|\langle\alpha, \lambda \rangle\geq 0 \,\, \forall \,\, \alpha\in R^{+}\}$. Note that for each $\alpha\in R$, there is a homomorphism $SL_{2}\xrightarrow{\phi_{\alpha}}G$ (see [2, p.19 ] ). We have $\check{\alpha}:G_{m}\rightarrow G$ defined by $\check{\alpha}(t)=\phi_{\alpha}(\begin{pmatrix}t & 0\\0 & t^{-1}\end{pmatrix})$. We also have $s_{\alpha}(\chi)=\chi-\langle\chi,\check{\alpha}\rangle\alpha$ for all $\alpha\in R$ and $\chi\in E_{1}$. Set $s_{i}=s_{\alpha_{i}}$ for every $i=1,2,\ldots,l$. Let $\{\omega_{i}:i=1,2,\ldots,l\}\subset E_{1}$ be the fundamental weights; i.e. $\langle\omega_{i},\check{\alpha_{j}}\rangle=\delta_{ij}$ for all $i,j=1,2,\ldots,l$. 

 Let $X_{w}=\overline{BwB/B}$  ( respectively, $X^{v}=\overline{B^{-}vB/B}$ ) denote the Schubert variety corresponding to $w$  (respectively, the opposite Schubert variety corresponding to $v$ ). Let $X_{w}^{v}:=\overline{BwB/B}\cap\overline{B^{-}vB/B}$ denote the Richardson variety corresponding to $v$ and $w$ where $v\leq w$ in the Bruhat order. Such varieties were first considered by Richardson in \cite{R}, who shows that such intersections are reduced and irreducible whereas the cell intersection $C_w \cap C^v$ have been studied by Deodhar \cite{D}. Richardson varieties have shown up in several contexts: such double coset intersections $BwB \cap B^-vB$ first appear in \cite{KL1}, \cite{KL2} and their standard monomial theory is studied in \cite{LL} and \cite{BL}. We refer to \cite{LR} for preliminaries in standard monomial theory. 

We recall the definition of the Hilbert-Mumford numerical function and the definition of semistable points from \cite{MFK}. We refer to \cite{New} for notations in geometric invariant theory. 

Let $X$ be a projective variety with an action of a reductive group $G$. A point $x \in X$ is said to be semi-stable with respect to a $G$-linearized 
line bundle $\mathcal L$ if there is a positive integer $m \in \mathbb N$, and 
a $G$-invariant section $s \in H^0(X, \mathcal L^m)^G$ with $s(x) \neq 0$.

Let $\lambda$ be a one-parameter subgroup of $G$. Let $x \in \mathbb 
P(H^0(X,\mathcal L)^*)$ and $\hat{x}= \sum_{i=1}^rv_i$, where each $v_i$ is a 
weight vector of $\lambda$ of weight $m_i$. Then the Hilbert-Mumford numerical function is defined by \[\mu^{\mathcal L}(x, \lambda):=-min\{m_i: i =1, \cdots, r\}\] Then the  Hilbert-Mumford criterion says that $x$ is semistable if and only if $\mu^{\mathcal L}(x, \lambda) \geq 0$ for all one parameter subgroup $\lambda$. 

We recall the following result from \cite{CSS} which will be used in section 3.

\begin{lemma}
 Let $G$ be a semisimple algebraic group, $T$ be a maximal torus, $B$ be a Borel subgroup of $G$ containing $T$ and $\overline{C}$ be as 
 defined above. 
 
 (a) Let $\mathcal L$ be a line bundle defined by the character $\chi \in X(T)$. Then if $x \in G/B$ is represented by $bwB$, $b \in B$ and $w \in W$ is represented by 
 an element of $N$ in the Bruhat decompsition of $G$ and $\lambda$ is a one parameter subgroup of $T$ which lies in $\overline{C}$, we have 
 $\mu^{\mathcal L}(x, \lambda)=-\langle w(\chi), \lambda \rangle$.
 
 (b) Given any set $S$ of finite number of one parameter subgroup $\lambda$ of $T$, there is an ample line bundle $\mathcal L$ on $G/B$ such that $\mu^{\mathcal L}(x, \lambda)
 \neq 0$ for all $x \in G/B$, $\lambda \in S$.  
 
 \end{lemma}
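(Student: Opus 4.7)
The plan is to handle (a) by an explicit weight-vector calculation in the projective embedding of \(G/B\) defined by \(\mathcal{L}\), and then to deduce (b) from (a) by a hyperplane-avoidance argument inside the ample cone.

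For (a), I would first assume (after replacing \(\mathcal{L}\) by a high power, which changes neither the sign nor the vanishing of the Hilbert--Mumford number) that \(\chi\) is dominant, so that the complete linear system of \(\mathcal{L}\) furnishes a \(G\)-equivariant embedding \(G/B \hookrightarrow \mathbb{P}(V_\chi)\), where \(V_\chi\) is the Weyl module of highest weight \(\chi\) with highest weight vector \(v_\chi\). Under this embedding, \(eB\) maps to \([v_\chi]\) and the representative \(bwB\) lifts to the vector \(b\,n_w v_\chi\), with \(n_w\) the chosen representative of \(w\) in \(N_G(T)\). Writing \(b = tu\) with \(t \in T\) and \(u \in U\) and expanding,
\[
b\,n_w v_\chi \;=\; \chi(t)\,(n_w v_\chi) \;+\; \sum_{\beta} c_\beta\, v_\beta,
\]
where each weight \(\beta\) in the sum has the form \(w(\chi) + \sum_{\alpha \in R^{+}} n_\alpha \alpha\) with nonnegative integers \(n_\alpha\) not all zero, because \(U\) acts on the weight vector \(n_w v_\chi\) (of weight \(w(\chi)\)) through positive root operators, which raise the weight by a nonzero nonnegative combination of positive roots. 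The coefficient \(\chi(t)\) of the leading term is nonzero. Pairing with \(\lambda \in \overline{C}\) and using \(\langle \alpha, \lambda\rangle \geq 0\) for every \(\alpha \in R^{+}\) then gives
\[
\min_\beta \langle \beta, \lambda\rangle \;=\; \langle w(\chi), \lambda\rangle,
\]
attained on the leading term, and the definition of \(\mu^{\mathcal L}\) delivers \(\mu^{\mathcal L}(x, \lambda) = -\langle w(\chi), \lambda\rangle\).

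For (b), I would convert the nonvanishing requirement into avoidance of finitely many hyperplanes in \(E_1 = X(T) \otimes \mathbb{R}\). Using the conjugation identity \(\mu^{\mathcal L}(x, n\lambda n^{-1}) = \mu^{\mathcal L}(n^{-1}x, \lambda)\) for \(n \in N_G(T)\), I may replace each \(\lambda \in S\) by a Weyl translate lying in \(\overline{C}\), at the cost of relabelling the base point by an element of \(W\); this keeps \(S\) finite. Part (a) then turns the condition \(\mu^{\mathcal{L}_\chi}(x,\lambda) \neq 0\) for every \(x \in G/B\) and every \(\lambda \in S\) into finitely many linear nonvanishing conditions \(\langle w(\chi), \lambda\rangle \neq 0\), indexed by \(w \in W\) and \(\lambda\) in the chosen Weyl translates. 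Each corresponding equality cuts out a hyperplane in \(E_1\), and a finite union of hyperplanes cannot cover the open, full-dimensional dominant regular chamber. I would therefore pick an integral \(\chi \in X(T)\) in that chamber lying off every one of these hyperplanes; the associated ample line bundle \(\mathcal{L} = \mathcal{L}_\chi\) satisfies the required nonvanishing.

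The substantive step is really just the weight analysis in (a), and within that the only point that needs genuine verification is that \(n_w v_\chi\) survives with nonzero coefficient after acting by an arbitrary \(b \in B\), while every other contribution strictly raises the pairing with \(\lambda\). This is forced by the Iwahori factorization \(b = tu\) and the fact that \(U\) acts on a weight vector as \(\mathrm{id}\) plus positive-root-raising operators. Once this is secured, (b) is a routine genericity argument and requires no new input.
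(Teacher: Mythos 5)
Your argument is correct in substance, but note that the paper does not prove this lemma at all: it is quoted verbatim from Seshadri's paper \cite{CSS}, so there is no internal proof to compare against. What you have written is essentially the classical argument from that source: embed $G/B$ into $\mathbb P(V_\chi)$, observe that the lift of $bn_wB$ decomposes as the weight-$w(\chi)$ vector $n_wv_\chi$ (with nonzero coefficient, since products of positive-root raising operators never return to the same weight) plus terms of weight $w(\chi)+\sum n_\alpha\alpha$ with $n_\alpha\geq 0$, and pair with $\lambda\in\overline C$ to see the minimum is attained on the leading term; part (b) then follows by conjugating each $\lambda$ into $\overline C$ and choosing a regular dominant integral $\chi$ off the finitely many hyperplanes $\langle w(\chi),\lambda\rangle=0$. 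Three small points to tidy up. First, writing $b=tu$, the coefficient of the leading term is $(w\chi)(t)$, not $\chi(t)$; this is immaterial since only its nonvanishing is used. Second, replacing $\mathcal L$ by a power replaces $\chi$ by $n\chi$ and so cannot convert a non-dominant character into a dominant one; your reduction to dominant $\chi$ is really forced by the paper's definition of $\mu$ via $\mathbb P(H^0(X,\mathcal L)^*)$, which only makes sense when the linear system is nonempty, and that is the only case the paper ever uses ($\chi=n\omega_r$). Third, in (b) one must assume the one-parameter subgroups in $S$ are nontrivial, since $\mu^{\mathcal L}(x,\lambda)=0$ identically for the trivial $\lambda$. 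None of these affects the validity of the approach.
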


In this paper, we present results for Richardson varieties in the orthogonal and symplectic Grassmannians. For any character $\chi$ of $B$, we denote by $\mathcal{L}_{\chi}$, the line bundle on $G/B$ given by the character $\chi$. We denote by $(X^{v}_{w})_{T}^{ss}(\mathcal{L}_{\chi})$ the semistable points of $X_{w}^{v}$ for the action of $T$ with respect to the line bundle $\mathcal{L}_{\chi}$. Using the notations from \cite{BL} we recall the following theorem which is needed in the proofs of the main theorems. 


\begin{theorem}[\cite{BL}, Proposition 6.]
  Let $\lambda$ be a dominant weight. The restriction to $X^v_w$ of the $p_{\pi}$, where $v \leq e(\pi) \leq i(\pi) \leq w$ form a basis of $H^0(X^v_w,\mathcal{L}_{\lambda})$.
\end{theorem}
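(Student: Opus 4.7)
The plan is to reduce the claim to the known standard monomial basis for $H^{0}(G/B,\mathcal{L}_{\lambda})$ and then descend through two successive restrictions. First I would invoke the classical fact (Lakshmibai-Seshadri, later refined by Littelmann) that for a dominant weight $\lambda$ the sections $\{p_{\pi}\}$, indexed by admissible pairs $\pi$ with endpoints $i(\pi)$ and $e(\pi)$ lying in $W/W_{\lambda}$, form a $T$-weight basis of $H^{0}(G/B,\mathcal{L}_{\lambda})$. Each $p_{\pi}$ is produced from extremal weight vectors of the Weyl module $V(\lambda)^{\ast}$; linear independence follows from the distinctness of the leading $T$-weights, and the total count matches the Weyl character formula.

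Next I would restrict to the Schubert variety $X_{w}$. From the short exact sequence
\[
0 \to \mathcal{I}_{X_{w}} \otimes \mathcal{L}_{\lambda} \to \mathcal{L}_{\lambda} \to \mathcal{L}_{\lambda}|_{X_{w}} \to 0
\]
on $G/B$, together with the vanishing $H^{1}(G/B,\mathcal{I}_{X_{w}} \otimes \mathcal{L}_{\lambda})=0$ (which comes from the Frobenius splitting of $G/B$ compatible with $X_{w}$ in the sense of Mehta-Ramanathan), the restriction map is surjective. A direct check on the open Bruhat cell $BwB/B$ shows that $p_{\pi}|_{X_{w}}$ vanishes identically precisely when $i(\pi) \not\leq w$ in the Bruhat order; the symmetric argument, applied on the $B^{-}$-orbit through $vB/B$, yields $p_{\pi}|_{X^{v}}=0$ iff $e(\pi) \not\geq v$. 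A Demazure character count then identifies the surviving $p_{\pi}$'s as bases of $H^{0}(X_{w},\mathcal{L}_{\lambda})$ and of $H^{0}(X^{v},\mathcal{L}_{\lambda})$ respectively.

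Finally I would combine the two restrictions through the sequence
\[
0 \to \mathcal{I} \otimes \mathcal{L}_{\lambda} \to \mathcal{L}_{\lambda}|_{X_{w}} \to \mathcal{L}_{\lambda}|_{X_{w}^{v}} \to 0,
\]
where $\mathcal{I}$ is the ideal sheaf of $X_{w}^{v}$ inside $X_{w}$. The crucial input is $H^{1}(X_{w},\mathcal{I} \otimes \mathcal{L}_{\lambda})=0$, and this is the main obstacle of the proof: it requires that $(X_{w},X_{w}^{v})$ admit a compatible Frobenius splitting, established via the splitting of $G/B$ along a suitable union of Schubert divisors, together with the known normality of Richardson varieties. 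Granted the vanishing, the surjection $H^{0}(X_{w},\mathcal{L}_{\lambda}) \twoheadrightarrow H^{0}(X_{w}^{v},\mathcal{L}_{\lambda})$ has kernel spanned by those $p_{\pi}|_{X_{w}}$ with $e(\pi) \not\geq v$ (deduced by restricting further to the opposite Schubert subvarieties contained in the complement of $X^{v}$). An Euler-characteristic computation for $\mathcal{L}_{\lambda}|_{X_{w}^{v}}$ matches the cardinality of $\{\pi : v \leq e(\pi) \leq i(\pi) \leq w\}$, so the images of these $p_{\pi}$ not only span but form a basis, as claimed.
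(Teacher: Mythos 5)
A preliminary remark: the paper does not prove this statement at all. It is recalled verbatim from Brion--Lakshmibai (\cite{BL}, Proposition 6) and used as a black box in the proofs of Theorems 4.5 and 5.5, so your sketch can only be measured against the argument in that reference, not against anything in the present paper.

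Your overall strategy --- restrict the standard monomial basis of $H^{0}(G/B,\mathcal{L}_{\lambda})$ first to $X_{w}$ and then to $X_{w}^{v}$, using Frobenius-splitting vanishing theorems to get surjectivity of the restriction maps --- is the standard one, and the first two stages are sound: the basis of $H^{0}(G/B,\mathcal{L}_{\lambda})$, the surjectivity onto $H^{0}(X_{w},\mathcal{L}_{\lambda})$, and the vanishing criterion ($p_{\pi}|_{X_{w}}=0$ iff $i(\pi)\not\leq w$, and dually on $X^{v}$) are all classical. The genuine gap is in the last stage. Granting surjectivity of $H^{0}(X_{w},\mathcal{L}_{\lambda})\to H^{0}(X_{w}^{v},\mathcal{L}_{\lambda})$ and the vanishing of $p_{\pi}$ on $X^{v}$ when $e(\pi)\not\geq v$, you obtain only that the $p_{\pi}$ with $v\leq e(\pi)\leq i(\pi)\leq w$ \emph{span} $H^{0}(X_{w}^{v},\mathcal{L}_{\lambda})$. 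Your assertion that the kernel of the restriction is spanned by the $p_{\pi}$ with $e(\pi)\not\geq v$ has an easy half (those sections do lie in the kernel) and a hard half (nothing else does, i.e.\ the surviving restrictions are linearly independent), and it is the hard half that carries the whole theorem. You propose to settle it by an Euler-characteristic computation for $\mathcal{L}_{\lambda}|_{X_{w}^{v}}$ matching the cardinality of $\{\pi: v\leq e(\pi)\leq i(\pi)\leq w\}$, but you give no independent way to compute $\chi(X_{w}^{v},\mathcal{L}_{\lambda})$; such a character formula for Richardson varieties is essentially equivalent to the statement being proved (given higher cohomology vanishing on $X_{w}^{v}$), so invoking it is circular. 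In \cite{BL} this point is not handled by a character count: the linear independence comes out of their geometric construction (a flat degeneration of the diagonal of the flag variety into a union of products of Richardson varieties) together with an induction using the boundary of $X_{w}^{v}$, which is a union of smaller Richardson varieties. To complete your argument you would need either an independent proof of the character formula for $X_{w}^{v}$ or an inductive linear-independence argument of that kind; as written, the proposal assumes the crux.
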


In the rest of this section we recall Bruhat ordering in the Weyl groups of type $B$, $C$ and $D$ and how it is related to the Bruhat order for the symmetric group $S_n$. 

\textbf{Bruhat order for type $B_n$ or $C_n$:} We consider $\alpha_1$ as special node of Dynkin diagram for type $B$ or $C$. So as a set of generators of Weyl group, we take $S=\{s_1, s_2, \ldots, s_n\}$, where $s_1=(1,-1)$ and $s_i=(i-1,i)$ $\forall 2 \leq i \leq n$ as in \cite{BB}. 
 
 As in \cite{IF} we use a formula for computing the length of $\sigma \in W$ given by \begin{equation}
 	l_B(\sigma)=\frac{inv(\sigma)+neg(\sigma)}{2},
 \end{equation} 
 
 where $inv(\sigma)=|\{(i,j)\in[-n,n]\backslash\{0\}\times[-n,n]\backslash\{0\}:i<j,\sigma(i)>\sigma(j)\}|$ and $neg(\sigma)=|\{i\in[1,n]:\sigma(i)<0\}|$.
 
 The following result gives a combinatorial characterization of the Bruhat order in $B_n$.
 
 \begin{lemma}[\cite{IF}, Proposition 2.8]
 	Let $\sigma$, $\tau$ $\in W$. Then $\sigma\leq\tau$ in the Bruhat order of $B_n$ if and only if $\sigma\leq\tau$ in the Bruhat order of the symmetric group $S_{[-n,n]\backslash\{0\}}$ where $S_{[-n,n]\backslash\{0\}}$ is the permutation group of integers $-n, -(n-1),\ldots, -1, 1, \ldots, n-1,n$. 
 \end{lemma}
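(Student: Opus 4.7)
My plan is to derive both implications from the tableau (rank-matrix) criterion for the Bruhat order, applied uniformly once $W(B_n)$ is embedded as signed permutations inside $S_{[-n,n]\setminus\{0\}}$. Recall that in a symmetric group $S_X$ on a finite totally ordered set $X$, two permutations $\pi, \rho$ satisfy $\pi \leq \rho$ in Bruhat order if and only if
\[
|\{k \leq i : \pi(k) \geq a\}| \leq |\{k \leq i : \rho(k) \geq a\}|
\]
for every prefix length $i$ and every threshold $a \in X$. Applied to $X = [-n,n]\setminus\{0\}$ with the ordering $-n < \cdots < -1 < 1 < \cdots < n$, this yields the Bruhat criterion for $S_{[-n,n]\setminus\{0\}}$.

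The substantive step is to show that $\sigma \leq_B \tau$ in $W(B_n)$ is characterised by the \emph{same} rank-matrix inequalities when $\sigma, \tau$ are regarded as elements of $S_{[-n,n]\setminus\{0\}}$; once this is established, the lemma is automatic. I would argue by induction on $l_B(\tau) - l_B(\sigma)$, using the cover relations in $W(B_n)$. A cover is given by multiplication by a signed reflection, which is either a diagonal reflection $t_i = (i,-i)$ or one of $t_{ij}^{\pm} = (i,\pm j)(-i,\mp j)$. For each type one checks directly that the cover strictly increases the rank matrix at the specific entries dictated by which positions are transposed, while preserving all other inequalities. The base case is straightforward since the effect of each simple reflection of $B_n$ on the rank matrix can be written down explicitly.

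The main obstacle is the case analysis for the three types of signed reflections together with the parity arithmetic forced by the length formula $2 l_B(\sigma) = inv(\sigma) + neg(\sigma)$. In particular, a $t_i$-cover in $W(B_n)$ can correspond to a chain of several adjacent transpositions when viewed inside $S_{[-n,n]\setminus\{0\}}$, so several rank entries can shift simultaneously, and one must verify that the net effect is still compatible with a single Bruhat cover on the $B_n$ side. Once this bookkeeping is in place, comparing the resulting criterion with the $S_{[-n,n]\setminus\{0\}}$ tableau criterion gives the desired equivalence directly, and no further work is needed.
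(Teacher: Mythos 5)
The paper offers no proof of this lemma: it is quoted directly from Incitti [IF, Proposition 2.8] (it is also Theorem 8.1.8 and Corollary 8.1.9 in Bj\"{o}rner--Brenti), so there is no in-paper argument to compare against. Judged on its own terms, your sketch establishes only one of the two implications. Verifying that each cover $\sigma\lessdot\sigma t$ in $W(B_n)$, for $t$ one of the three types of signed reflections, increases the rank matrix entrywise shows that $\sigma\le\tau$ in $B_n$ implies the rank-matrix inequalities, hence $\sigma\le\tau$ in $S_{[-n,n]\setminus\{0\}}$. That is the easy direction.

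The converse --- that the rank-matrix inequalities, equivalently $\sigma\le\tau$ in $S_{[-n,n]\setminus\{0\}}$, force $\sigma\le\tau$ in $B_n$ --- is the substantive half, and your induction ``on $l_B(\tau)-l_B(\sigma)$ using the cover relations'' does not reach it. To run that induction you must, given signed permutations $\sigma\ne\tau$ satisfying the inequalities, \emph{produce} a signed reflection $t$ with $\sigma\lessdot\sigma t$ in $W(B_n)$ and with $\sigma t$ still dominated by $\tau$ in the rank matrix; checking that covers increase the rank matrix supplies no such $t$. This lifting step is where the real content lies, precisely because a Bruhat chain joining $\sigma$ to $\tau$ inside $S_{[-n,n]\setminus\{0\}}$ will in general pass through permutations that are not signed permutations, so one cannot simply restrict an $S_{2n}$-chain to $W(B_n)$ and declare ``no further work is needed.'' (A smaller slip: $t_i=(i,-i)$ acts as a single, generally non-adjacent, transposition of $S_{[-n,n]\setminus\{0\}}$, not as a chain of several adjacent transpositions; multiplying by it yields a Bruhat relation but not necessarily a cover on the symmetric-group side, which is harmless for the order but should be stated correctly.) Until the lifting argument is supplied, the proposal proves only half of the lemma.
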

 
 \textbf{Bruhat order for type $D_n$:} As above we consider $\alpha_1$ as special node for Dynkin diagram for type $D$. For a set of generators of Weyl group we have $S=\{s_1, s_2, \ldots, s_n\}$, where $s_1=(1,-2)(-1,2)$ and $s_i=(i-1,i)$ $\forall 2 \leq i \leq n$ as in \cite{BB}. 
 
 As in \cite{IF} we use a formula for computing the length of $\sigma \in W$ given by 
 \begin{equation}
 	l_D(\sigma)=\frac{inv(\sigma)-neg(\sigma)}{2},
 \end{equation} 
 
 where $inv(\sigma)$ and $neg(\sigma)$ are as defined above.
 
 The following result gives a combinatorial characterization of the Bruhat order in $D_n$.
 
 \begin{lemma}[\cite{BB}, Theorem 8.2.8]
 	Let $\sigma$, $\tau$ $\in W$. Then $\sigma\leq\tau$ in the Bruhat order of $D_n$ if and only if 
 	
 	(i) $\sigma \leq_B \tau$ (Bruhat order in type $B$) and
 	
 	(ii) $\forall a, b \in [1,n]$, if $[-a, a] \times [-b, b]$ is an empty rectangle for both
 	$\sigma$ and $\tau$ and $\sigma[-a-1, b+1] = \tau[-a-1, b+1]$, then $\sigma[-1, b + 1] \equiv \tau[-1, b + 1]$ (\text{mod $2$}) where $\sigma[i,j]=|\{a\in[-n,n]: a \leq i \text{ and } \sigma(a) \geq j\}|$ for $i,j \in [-n,n]$. 
 \end{lemma}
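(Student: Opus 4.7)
The plan is to follow the standard \emph{tableau/rank criterion} strategy that Björner--Brenti use for Theorem 8.2.8, since the statement is lifted from their book. The key input is that for every classical Weyl group the Bruhat order is completely determined by the rank functions $\sigma[i,j]=|\{a\in[-n,n]:a\le i,\ \sigma(a)\ge j\}|$, together with any additional sign/parity invariants that distinguish the type. So I would prove both implications by passing between the combinatorial data $(\sigma[i,j])$ and chains of reflections.

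For the ``only if'' direction I would argue by induction on $\ell_D(\tau)-\ell_D(\sigma)$. Assume $\sigma\lessdot_D\tau$ is a cover, so $\tau=t\sigma$ for some reflection $t$ in the root system of $D_n$. Since every such $t$ is also a reflection in $B_n$, one gets $\sigma\le_B\tau$, giving (i). For (ii), the crucial point is that the reflections in $D_n$ are exactly the products of either one transposition $(i,j)(-i,-j)$ with $i,j$ of the same sign, or $(i,-j)(-i,j)$ with $i,j$ of opposite signs; in particular no single-sign-change $(i,-i)$ occurs. This means that the parity $\sigma[-1,b+1]\bmod 2$, which counts negative entries to the right of position $b$, is preserved by every Bruhat cover in $D_n$ once $\sigma$ and $\tau$ already agree outside the rectangle $[-a,a]\times[-b,b]$ --- any discrepancy would force a change in $\sigma[-a-1,b+1]$.

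For the ``if'' direction, I would start from a type-$B$ chain $\sigma=\sigma_0\lessdot_B\sigma_1\lessdot_B\cdots\lessdot_B\sigma_N=\tau$ supplied by (i), and use (ii) to repair it into a type-$D$ chain. Each $B$-cover that is already a $D$-cover is kept. Each $B$-cover coming from a reflection $(i,-i)$ must be paired with a compatible second $(j,-j)$ cover further along, producing together the $D$-reflection $(i,-j)(-i,j)$. The parity condition (ii), applied at the appropriate rectangles, guarantees that an even number of such $B$-only reflections occur in every relevant window, so the pairing can always be carried out while remaining inside the Bruhat interval.

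The main obstacle will be the second step of the ``if'' direction: making the rearrangement of a $B$-chain into a $D$-chain explicit without violating monotonicity of some $\sigma_k[i,j]$. In practice, one handles this by identifying the \emph{essential set} of rectangles where the parity discrepancy could arise (the empty rectangles appearing in (ii)) and checking that pairings can always be performed on the boundary of such rectangles. Since the result is precisely the content of \cite[Theorem 8.2.8]{BB}, we ultimately quote it rather than reproduce this machinery here.
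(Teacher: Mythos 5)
The paper offers no proof of this lemma at all --- it is imported verbatim from Bj\"orner--Brenti, Theorem 8.2.8 --- and your proposal ultimately does exactly the same, so the two agree in the only respect that matters. Be aware, though, that the sketch preceding your citation is not itself a proof: the ``if'' direction's repair of a type-$B$ chain into a type-$D$ chain by pairing sign-change covers is precisely the hard part (it is not clear the paired reflections compose to a $D_n$-reflection whose product stays inside the Bruhat interval), so the citation, not the sketch, is carrying the argument.
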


\section{A necessary condition for admitting semi-stable points}\label{s.the-criterion}
Let $G$ be a simple simply-connected algebraic group and $P_r$ be a parabolic subgroup of $G$ corresponding to the simple root $\alpha_r$. Let $\mathcal L_r$ be the line bundle on $G/P_{r}$ corresponding to the fundamental weight $\omega_r$. In this section, we provide a criterion for Richardson varieties in $G/P_r$ to admit semistable points with respect to $\mathcal L_r$. This criterion was proved for type $A$ in \cite{KPPU}.  

\begin{proposition}
Let $G$ be a simple simply connected algebraic group and let $P_r$ be the maximal parabolic corresponding to the simple root $\alpha_r$. Let $\mathcal L_r$ be the line bundle on $G/P_{r}$ corresponding to the fundamental weight $\omega_r$. Let $v,w \in W^{P_r}$. If $(X^{v}_{w})_{T}^{ss}(\mathcal{L}_{r})\neq\emptyset$ then $v(n\omega_r) \geq 0$ and $w(n\omega_r) \leq 0$.
\end{proposition}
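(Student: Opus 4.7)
The plan is to combine the Hilbert--Mumford numerical criterion with Lemma~2.1(a), after lifting the problem from $G/P_r$ to $G/B$ and placing a semistable point in the open Richardson cell, where both the Bruhat and the opposite-Bruhat descriptions of the point are simultaneously available.

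First I would reduce to $G/B$. The projection $\pi\colon G/B\to G/P_r$ is $T$-equivariant and satisfies $\pi^*\mathcal L_r=\mathcal L_{\omega_r}$; by the projection formula together with $\pi_*\mathcal O_{G/B}=\mathcal O_{G/P_r}$, pullback identifies $T$-invariant sections of the two line bundles, so a nonempty $(X^v_w)^{ss}_T(\mathcal L_r)$ in $G/P_r$ produces a nonempty $(X^v_w)^{ss}_T(\mathcal L_{\omega_r})$ in $G/B$ (for the same $v,w\in W^{P_r}$). Since $X^v_w\subset G/B$ is irreducible and the semistable locus is a $T$-stable open subset, a nonempty one is dense and therefore meets the open cell $(BwB\cap B^-vB)/B$. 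Choose a semistable point $\widetilde x$ in this cell; it can be represented simultaneously as $bwB$ with $b\in B$ and as $b^-vB$ with $b^-\in B^-$.

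Next I would extract the two inequalities. Applying Lemma~2.1(a) to $\widetilde x=bwB$ with $\chi=\omega_r$, the Hilbert--Mumford criterion gives
\[ -\langle w(\omega_r),\lambda\rangle=\mu^{\mathcal L_{\omega_r}}(\widetilde x,\lambda)\ge 0\text{ for every }\lambda\in\overline C. \]
Evaluating at the fundamental coweights $\lambda_i$ generating $\overline C$ (and dual to the simple roots) shows that each coefficient of $\alpha_i$ in $w(\omega_r)$ is non-positive, i.e.\ $w(\omega_r)\le 0$; scaling by the positive integer $n$ yields $w(n\omega_r)\le 0$.

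For the symmetric statement, the same proof of Lemma~2.1(a)---tracking $\lim_{t\to 0}\lambda(t)\widetilde x$---transports to the opposite cell: for $\widetilde x=b^-vB$ and $\lambda\in-\overline C$, anti-dominance of $\lambda$ contracts $U^-$ rather than $U$ to the identity as $t\to 0$, so the limit is $vB$ with fiber weight $-v(\omega_r)$. This yields $\mu^{\mathcal L_{\omega_r}}(\widetilde x,\lambda)=-\langle v(\omega_r),\lambda\rangle\ge 0$ for every anti-dominant $\lambda$, forcing $v(\omega_r)\ge 0$ and hence $v(n\omega_r)\ge 0$. The main obstacle I anticipate is precisely this opposite-cell analogue of Lemma~2.1(a); it is routine but requires careful bookkeeping of sign conventions (which chamber, which limit, which weight on the fiber) to confirm that $vB$---rather than some other $T$-fixed point of $X^v_w$---is the relevant limit.
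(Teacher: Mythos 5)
Your argument is correct and is essentially the paper's proof: both choose a semistable point in the open cell $(BwP_r/P_r)\cap(B^-vP_r/P_r)$ and apply the Hilbert--Mumford criterion together with Lemma~2.1(a) in the dominant and anti-dominant chambers to read off $w(\omega_r)\le 0$ and $v(\omega_r)\ge 0$. The only divergence is the ``main obstacle'' you flag: rather than re-deriving an opposite-cell analogue of Lemma~2.1(a), the paper disposes of it with the identity $\mu^{\mathcal L_{\chi}}(x,\lambda)=\mu^{\mathcal L_{\chi}}(w_0x,\,w_0\lambda w_0^{-1})$, which moves $w_0x$ into the cell $B(w_0v)P_r/P_r$ and $w_0\lambda$ into the dominant chamber, so the already-proved case of the lemma yields $\mu^{\mathcal L_{\chi}}(x,\lambda)=-\langle v(\chi),\lambda\rangle$ for anti-dominant $\lambda$.
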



\begin{proof}
Let $\chi =n\omega_{r}$. Assume that  $(X^{v}_{w})_{T}^{ss}(\mathcal{L}_{\chi})\neq\emptyset$. Let \small{$x\in ((BwP_{r}/P_{r})\bigcap (B^{-}vP_{r}/P_{r}))_{T}^{ss}(\mathcal{L}_{\chi})$}. \normalsize Then by Hilbert-Mumford criterion \cite[Theorem 2.1]{MFK}, we have $\mu^{\mathcal{L}_{\chi}}(x,\lambda)\geq 0$ for all one parameter subgroups $\lambda$ of $T$. Since $x\in ((BwP_{r}/P_{r})\bigcap (B^{-}vP_{r}/P_{r}))_{T}^{ss}(\mathcal{L_{\chi}})$,  using \cite[Lemma 2.1]{CSS}, we see that $\mu^{\mathcal{L}_{\chi}}(x,\lambda)=-\langle w(\chi),\lambda\rangle$  for every one parameter subgroup $\lambda$ in the fundamental chamber associated to $B$, and  $\mu^{\mathcal{L}_{\chi}}(x,\lambda)=\mu^{\mathcal{L}_{\chi}}(w_{0}x,w_{0}\lambda w_{0}^{-1})=-\langle w_{0}v(\chi),w_{0}(\lambda)\rangle=-\langle v(\chi),\lambda\rangle$ for every one parameter subgroup $\lambda$ of $T$ in the Weyl  chamber associated to $B^{-}$. Since $x$ is a semistable point, we have $\mu^{\mathcal{L}_{\chi}}(x,\lambda)\geq 0$ for every one parameter subgroup $\lambda$ of $T$. Hence $\langle w(\chi),\lambda\rangle\leq 0$ for all $\lambda$ in the Weyl  chamber associated to $B$ and $\langle v(\chi),\lambda\rangle \geq 0$ for all $\lambda$ in the Weyl  chamber associated to $B^{-}$. This implies that $w(\chi)\leq 0$ and $v(\chi)\geq 0$. 
\end{proof}

For $G$ is of type $A$ in \cite{KPPU} it is shown that the above conditions are also sufficient. For type $B,C$ and $D$ the example below shows that the conditions $w(\chi)\leq 0$ and $v(\chi)\geq 0$ are only necessary but not sufficient. 

{\bf Example:} Let $G$ be either of type $B_4$ or $C_4$ and $\chi= \omega_3$. Let $v=(1,2,-3,4)=s_3s_2s_1s_2s_3$ and $w=(1,4,-3,2)=s_3s_4s_2s_1s_2s_3$. We have $v(\omega_3)=\alpha_4$ and $w(\omega_3)=-\alpha_3$.  The sections of $\mathcal{L}_{\chi}$ on $X^{v}_{w}$ are of the form $p_v^m p_w^n$ where $m,n \in \mathbb N$. But, $m v(\chi)+n w(\chi) \neq 0$ for any $m,n \in \mathbb N$. So these sections are not $T$-invariant. So the set $(X^{v}_{w})_{T}^{ss}(\mathcal{L}_{\chi})$ is empty. 

 If $G$ is of type $D_4$ and $\chi=\omega_3$, we take $v=(-1,4,-2,3)=s_4s_1s_2s_3$ and $w=(-1,2,-4,3)=s_4s_3s_1s_2s_3$. Here we have $v(\omega_3)=\alpha_3$ and $w(\omega_3)=-\alpha_4$. As in the last paragraph, here also we conlude that the set $(X^{v}_{w})_{T}^{ss}(\mathcal{L}_{\chi})$ is empty.

In order to find a sufficient condition for the Richardson varieties to admit semistable points we first need to classify all $v,w \in W^P$ satisfying the above conditions. Since $\chi$ is a dominant weight we have $w_1(\chi) \leq w_2(\chi)$ for $w_1 \geq w_2$. So we just need to describe all maximal $v$ and minimal $w$ such that $v(\chi)\geq 0$ and $w(\chi)\leq 0$. Note that for $G$ is of type $A$ since all the fundamental weights are minuscule the maximal $v$ and minimal $w$ satisfying the above conditions are unique (see \cite{KS}) but for other types this is not the case. 

We conclude this section by introducing some notation here:

{\bf Notation:} For $s,t \in \mathbb Z$ such that $s \leq t$ we set $[s,t] = \{s,s+1, \ldots,t\}$. For $p \in \mathbb{N}$ we set $J_{p,[s,t]} =\{(i_1,i_2,\ldots i_p): i_k \in [s,t], \forall k$ and $i_{k+1}-i_k \geq 2\}$. For $\underline{i} \in J_{p, [s, t]}$ we set $[\underline{i}] = \{i_1, i_2, \ldots, i_p\}$ and $-\underline{i} = (-i_p, -i_{p-1}, \ldots, -i_1) \in J_{p, [-t, -s]}$. For a set $S \subset \mathbb{Z}$, $S \uparrow$ denotes the integers in the set $S$ occurring in increasing order and $S \downarrow$ denotes the integers in the set $S$ occurring in decreasing order.

\section{Type B and C}

Now for $G$ is of type $B$ and $C$ and for a fundamental weight $\omega_r$, we are in a position to describe all the minimal $w \in W^{I_r}$ and maximal $v \in W^{I_r}$ such that $v(\omega_r) \geq 0$ and $w(\omega_r) \leq 0$.
  
 \begin{proposition}
   The set of all maximal $v$ in $W^{I_r}$ such that $v(\omega_r) \geq 0 $ for type $B_n$ and $C_n$ are the following: 

   (i) Let $r=1$. Then
   \[
    v=\left\{
    \begin{array}{lr}
    (-(n-1), -(n-3), \ldots, -3, -1, 2, 4, \ldots, n-2, n),& \text{if n is even}\\
     (-(n-1), -(n-3), \ldots, -4, -2, 1, 3, 5, \ldots, n-2, n),& \text{if n is odd}.\\
    \end{array}
    \right.
    \] 
    
   
   (ii) Let $2 \leq r \leq n-1$ and $(n+1)-r=2m$. For any $\underline{i} = (i_1,i_2, \ldots, i_m) \in J_{m,[2,n]}$, there exists unique maximal $v_{\underline{i}}\in W^{I_r}$ such that $v_{\underline{i}}(\omega_r)=(\sum_{k=1}^m\alpha_{i_k})$. We have 
   $v_{\underline{i}}=([1,n]\backslash \{[\underline{i}], [\underline{i^{\prime}}]\}\uparrow, -\underline{i^{\prime}}, \underline{i})$, where $\underline{i^{\prime}} = (i_1-1, i_2-1, \ldots, i_m-1) \in J_{m, [1, n-1]}$.

    (iii) Let $2 \leq r \leq n-1$ and $(n+1)-r=2m+1$. For any $\underline{i}=(i_1,i_2,\ldots , i_m) \in J_{m,[3, n]}$, there exists unique maximal $v_{\underline{i}} \in W^{I_r}$ such that $v_{\underline{i}}(\omega_r)=(\alpha_1+\sum_{k=1}^m\alpha_{i_k})$ (for $B_n$) and $v_{\underline{i}}(\omega_r)=(\frac{1}{2}\alpha_1+\sum_{k=1}^m\alpha_{i_k})$ (for $C_n$). We have $v_{\underline{i}}= ([1,n]\backslash \{1, [\underline{i}], [\underline{i^{\prime}}]\}\uparrow, -\underline{i^{\prime}}, 1, \underline{i})$, where $\underline{i^{\prime}} = (i_1-1, i_2-1, \ldots, i_m-1) \in J_{m, [2, n-1]}$.
     
     (iv) Let $r=n$. Then $v=(2, 3, 4, \ldots, n-1, n, 1)$.
     
         \end{proposition}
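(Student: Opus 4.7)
The approach is to parametrize $W^{I_r}$ combinatorially, translate the condition $v(\omega_r)\geq 0$ into a lattice-walk condition on a sign sequence, and then identify the minimal walks corresponding to the Bruhat-maximal $v$'s.

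First, $v\in W^{I_r}$ iff $v(\alpha_i)>0$ for every $i\neq r$, which, for types $B_n$ and $C_n$ with the conventions of Section 2, translates to $0<v(1)<v(2)<\cdots<v(r-1)$ and $v(r)<v(r+1)<\cdots<v(n)$ as signed integers. Setting $B=\{|v(i)|:r\leq i\leq n\}\subseteq[1,n]$ with $|B|=n-r+1$ and $\eta(|v(i)|)=\operatorname{sgn}(v(i))$ yields a bijection between $W^{I_r}$ and pairs $(B,\eta)$. Expanding $\omega_r=\epsilon_r+\cdots+\epsilon_n$ (with the spin-weight formula for $\omega_1$ in type $B$) in the simple root basis via $\epsilon_j=\alpha_1+\cdots+\alpha_j$ (respectively $\tfrac{1}{2}\alpha_1+\alpha_2+\cdots+\alpha_j$ in type $C$), one obtains $v(\omega_r)=\sum_k c_k\alpha_k$, where each $c_k$ is a positive multiple of $N_k:=\sum_{j\in B,\,j\geq k}\eta(j)$. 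Hence $v(\omega_r)\geq 0$ iff $N_k\geq 0$ for every $k\in[1,n]$.

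Each Bruhat cover $v<vs_\beta$ in $W^{I_r}$ (with $\beta\in R^+$ and $v(\beta)\in R^+$) satisfies $v(\omega_r)-vs_\beta(\omega_r)=\langle\omega_r,\beta^\vee\rangle\,v(\beta)\geq 0$, so upward motion in Bruhat order within $\{v:v(\omega_r)\geq 0\}$ only decreases the weight in dominance; the maximal $v$'s therefore correspond to minimal $\mu=v(\omega_r)\in W\omega_r\cap(\text{positive root cone})$. Interpret $(N_k)$ as a lattice walk indexed by $k$ descending from $n+1$ to $1$, starting at $N_{n+1}=0$, moving by $\eta(j)$ at each $j\in B$, and constant elsewhere; the constraint becomes that the walk stays non-negative. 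Minimality of $\mu$ forces the walk to remain in $\{0,1\}$ (any excursion above height $1$ can be removed by an explicit local modification of $(B,\eta)$ producing a smaller element of $W\omega_r\cap(\text{positive cone})$) and forces each up-step to be immediately cancelled by a down-step at the next $B$-index (any plateau at height $1$ spanning several $\alpha_k$'s can be shrunk by moving the adjacent down-step index). The parity of $|B|=n-r+1$ then determines the profile of the minimal walks: if $|B|=2m$ the walk traces $m$ consecutive down-up pairs $(i_k-1,i_k)$ and returns to $0$, giving $\mu=\sum_{k=1}^m\alpha_{i_k}$ with $\underline{i}\in J_{m,[2,n]}$; if $|B|=2m+1$ an additional up-step at $k=1$ is required, giving $\mu=\alpha_1+\sum_k\alpha_{i_k}$ (respectively $\tfrac{1}{2}\alpha_1+\sum_k\alpha_{i_k}$ in type $C$) with $\underline{i}\in J_{m,[3,n]}$.

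Rebuilding the signed permutation from the minimal $(B,\eta)$ recovers the formulas (ii)--(iii): the $m$ consecutive $\{i_k-1,i_k\}$ pairs with signs $(-,+)$ produce the tail $(-\underline{i'},\underline{i})$, while the remaining positive indices in $[1,n]\setminus([\underline{i}]\cup[\underline{i'}])$ fill positions $1,\ldots,r-1$ in increasing order; in case (iii) the extra $+1$ is inserted between $-\underline{i'}$ and $\underline{i}$. The boundary cases $r=1$ (where $B=[1,n]$ is forced and $\underline{i}$ is uniquely determined) and $r=n$ (where $|B|=1$ and $\mu=\epsilon_1$) collapse to the unique $v$'s in (i) and (iv). The principal technical obstacle is the minimality argument: for every $(B,\eta)$ giving a walk that is not of the claimed shape, one must exhibit a modification to $(B',\eta')$ of the same cardinality yielding $v'(\omega_r)<v(\omega_r)$ in dominance order \emph{and} $v'>v$ in Bruhat order, the Bruhat comparison requiring a careful length calculation via the inversion-plus-sign-count formula of Section 2.
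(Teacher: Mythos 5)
Your reparametrization of $W^{I_r}$ by sign data $(B,\eta)$ and the translation of $v(\omega_r)\geq 0$ into non-negativity of the partial sums $N_k$ is correct, and it is a genuinely different organization from the paper's. The paper instead (a) constructs each $v_{\underline{i}}$ by descending induction on a partial order on $J_{m,[2,n]}$ starting from the top tuple $(n-2m+2,\ldots,n-2,n)$, and (b) proves maximality by classifying all Bruhat covers of $v_{\underline{i}}$ (its Lemma 4.2, which requires the inversion-count length computations) and checking that each cover sends $\omega_r$ outside the positive cone. Your route buys a cheaper proof of both existence/uniqueness (immediate from the bijection with $(B,\eta)$) and of the maximality of the listed elements: once you know $\mu_{\underline{i}}=\sum_k\alpha_{i_k}$ is dominance-minimal in $W\omega_r\cap\{\lambda\geq 0\}$ (which follows from a short length-of-weight computation, since the $\alpha_{i_k}$ are mutually orthogonal), the order-reversal $v<w\Rightarrow v\omega_r>w\omega_r$ on $W^{I_r}$ gives Bruhat-maximality with no cover analysis at all.

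The gap is in the completeness direction, and you have flagged it yourself without closing it. From "$v$ is Bruhat-maximal" you cannot conclude "$v\omega_r$ is dominance-minimal": order-reversal only gives the implication minimal $\mu\Rightarrow$ maximal $v$. To rule out a $v$ whose walk is not of the claimed shape, you must produce an explicit $w>v$ in the Bruhat order with $w\omega_r\geq 0$; exhibiting some $\mu'<v\omega_r$ in the positive cone (your "local modification of $(B,\eta)$") does not by itself produce a Bruhat-larger element, because the dominance order on the orbit is in general strictly weaker than the (reversed) Bruhat order on $W^{I_r}$. This is exactly where the paper does its real work: it observes that when $v\omega_r=\lambda$ has adjacent support one can take $w=s_{i_k}v$ with $s_{i_k}\lambda=\lambda-\alpha_{i_k}\geq 0$, and for a \emph{simple} reflection the inequality $\langle\lambda,\check\alpha_{i_k}\rangle>0$ together with dominance of $\omega_r$ forces $s_{i_k}v>v$ automatically. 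So your step can be repaired -- you should check that every non-minimal walk admits a \emph{simple} reflection $s_j$ with $\langle v\omega_r,\check\alpha_j\rangle>0$ and $s_j(v\omega_r)\geq 0$ (covering both the excursions above height $1$ and the plateaux at height $1$), after which no inversion-counting is needed -- but as written the claim that your modifications yield $v'>v$ is asserted, not proved, and it is the load-bearing step of the proposition.
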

        
   We prove this proposition after proving the following lemma.
          
   \begin{lemma} Let $v$, $v_{\underline{i}}$ are as defined in Proposition 4.1.

(i) Let $r=1$. Then $w > v$ and $l(w)=l(v)+1$ iff $w=s_kv$ where $k$ takes the following values  
$\left
\{\begin{array}{lr}
   \
    k  \in \{2, 4, 6, \ldots, n-2, n\},& {n \,\, is \,\, even}\\
    k  \in \{1, 3, 5, \ldots, n-2, n\},& {n \,\, is \,\, odd}.
    \end{array}\right.$

 (ii) Let $2 \leq r \leq n-1$. Then  $w > v_{\underline{i}}$ and  $l(w)=l(v_{\underline{i}})+1$ 
   if and only if $w$ is either $s_{\alpha_{i_t}}v_{\underline{i}}$ or 
    $s_{\alpha_{i_t}+\alpha_{{i_t}+1}} v_{\underline{i}}$ for some $i_t$ such that $|i_t-i_{t+1}| \geq 3$ or
    $s_{\alpha_{i_t}+\alpha_{{i_t}-1}} v_{\underline{i}}$ for some $i_t$ such that $|i_t-i_{t-1}| \geq 3$.

(iii) Let $r=n$. Then $w > v$ and $l(w)=l(v)+1$ iff $w=s_1v$. 
    \end{lemma}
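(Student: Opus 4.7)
The plan is to prove Lemma 4.2 by direct combinatorial computation in the signed permutation model of the Weyl group of types $B_n$ and $C_n$, using the length formula $l_B(\sigma) = (\mathrm{inv}(\sigma) + \mathrm{neg}(\sigma))/2$ of (2.1). A cover of $v$ in the Bruhat order of $W^{I_r}$ has the form $s_\beta v$ for a positive root $\beta$ with $l(s_\beta v) = l(v) + 1$ and $s_\beta v \in W^{I_r}$. The first step is to unpack the condition $w \in W^{I_r}$ into the combinatorial statement that $0 < w(1) < w(2) < \cdots < w(r-1)$ together with $w(r) < w(r+1) < \cdots < w(n)$ (omitting the empty chain when $r = 1$ or $r = n$), which comes from requiring $w(\alpha_i) > 0$ for each $\alpha_i \in I_r^c$.

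For part (i) with $r = 1$, the simple reflection $s_k$ with $k \geq 2$ swaps the values $k-1$ and $k$ together with their negatives, while $s_1$ flips the sign of $\pm 1$. A direct inspection of the alternating-sign one-line notation of $v$ from Proposition 4.1(i) shows that the increasing chain $v(1) < \cdots < v(n)$ is preserved exactly for the parities of $k$ listed in the lemma, and in each admissible case the length formula gives $l(s_k v) = l(v) + 1$. Part (iv) with $r = n$ is immediate: the only ``out of place'' value in $v = (2, 3, \ldots, n, 1)$ is $1$ in the last slot, and only $s_1$ turns it into $-1$ while keeping $0 < w(1) < \cdots < w(n-1)$ intact. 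For parts (ii) and (iii), the tail of $v_{\underline{i}}$ contains the paired values $\pm(i_t - 1)$ in the $-\underline{i'}$ block and $\pm i_t$ in the $\underline{i}$ block. The simple reflection $s_{i_t}$ swaps $i_t \leftrightarrow i_t - 1$ and $-i_t \leftrightarrow -(i_t - 1)$, and the baseline spacing $i_{t+1} - i_t \geq 2$ coming from $\underline{i} \in J_{m,[2,n]}$ is precisely what ensures both blocks remain increasing. The non-simple reflection $s_{\alpha_{i_t} + \alpha_{i_t + 1}} = s_{e_{i_t + 1} - e_{i_t - 1}}$ swaps $\pm(i_t - 1) \leftrightarrow \pm(i_t + 1)$; the extra gap $i_{t+1} - i_t \geq 3$ is the precise inequality needed so that the $-\underline{i'}$ block stays monotonic after the slot containing $-(i_t-1)$ is replaced by $-(i_t + 1)$. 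The symmetric analysis for $s_{\alpha_{i_t} + \alpha_{i_t - 1}} = s_{e_{i_t} - e_{i_t - 2}}$ forces $i_t - i_{t-1} \geq 3$; part (iii) differs only by the presence of an additional fixed entry $1$ between the two blocks.

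The main obstacle is the exhaustiveness step: ruling out any other positive root $\beta$ from producing a cover in $W^{I_r}$. The argument proceeds by the type of positive root in $B_n$ (respectively $C_n$): $e_a - e_b$, $e_a + e_b$, and the special roots $e_a$ (respectively $2e_a$). In each case the explicit form of $v$ (or $v_{\underline{i}}$) given by Proposition 4.1 lets one read off that the corresponding swap either creates an inversion in one of the monotonicity chains cutting out $W^{I_r}$ (forcing $s_\beta v_{\underline{i}} \notin W^{I_r}$) or changes the signed inversion count by strictly more than one (giving $l(s_\beta v_{\underline{i}}) > l(v_{\underline{i}}) + 1$). The most delicate reflections are those of the form $s_{e_a \pm e_b}$ with $a$ and $b$ both indexing entries in the same block of $v_{\underline{i}}$, since these can preserve that block while still upsetting the mirror block; it is here that the sharp cutoffs $|i_t - i_{t \pm 1}| \geq 3$ emerge as the precise thresholds.
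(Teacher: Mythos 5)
Your computational framework (signed permutations, membership in $W^{I_r}$ read off from the two monotonicity chains, lengths via the formula $l_B=(\mathrm{inv}+\mathrm{neg})/2$) is sound, and your analysis of why the listed reflections $s_{\alpha_{i_t}}$ and $s_{\alpha_{i_t}+\alpha_{i_t\pm 1}}$ do produce covers, including the sharp thresholds $|i_t-i_{t\pm 1}|\geq 3$, is correct and matches the paper's computations. The problem is the exhaustiveness step: your claimed dichotomy --- every other reflection either breaks a monotonicity chain (so $s_\beta v_{\underline{i}}\notin W^{I_r}$) or increases the inversion count by more than one length step --- omits a third case that actually occurs, namely $s_\beta v_{\underline{i}}\in W^{I_r}$ with both chains intact but $l(s_\beta v_{\underline{i}})=l(v_{\underline{i}})-1$, i.e.\ $s_\beta v_{\underline{i}}<v_{\underline{i}}$. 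In part (i) this is the whole story: for $r=1$ and $n$ even, $s_1v$ and $s_3v,s_5v,\ldots$ all preserve the increasing chain (e.g.\ $s_3$ sends $(\ldots,-5,-3,-1,2,4,\ldots)$ to $(\ldots,-5,-2,-1,3,4,\ldots)$), so your assertion that the chain is preserved ``exactly for the parities of $k$ listed'' is false; those $s_kv$ are excluded only because $v^{-1}(\alpha_k)<0$. The same happens in part (ii): in $B_5$ with $r=4$ and $v_{(3)}=(1,4,5,-2,3)$, one has $s_2v_{(3)}=(2,4,5,-1,3)$ and $s_{e_2}v_{(3)}=(1,4,5,2,3)$, both of which lie in $W^{I_4}$ with all chains intact, yet both are shorter than $v_{(3)}$. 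Even your part (iii) argument fails for the same reason: $s_2\cdot(2,3,\ldots,n,1)=(1,3,\ldots,n,2)$ also keeps $0<w(1)<\cdots<w(n-1)$ intact, so ``only $s_1$ keeps the chain intact'' is not what singles out $s_1$.

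The repair is to add a third branch determining the Bruhat direction for every reflection whose product survives the $W^{I_r}$ test, e.g.\ via the sign of $v_{\underline{i}}^{-1}(\beta)$, or --- as the paper does --- by comparing $s_\beta v_{\underline{i}}(\omega_r)$ with $v_{\underline{i}}(\omega_r)$: whenever $s_\beta v_{\underline{i}}(\omega_r)\geq v_{\underline{i}}(\omega_r)$ one concludes $s_\beta v_{\underline{i}}\leq v_{\underline{i}}$ in $W^{I_r}$, which disposes of all of the above cases uniformly. With that branch added your route does differ from the paper's in one respect worth keeping: the paper first asserts (without justification) that a cover can only arise from a root of height at most $2$ and only then runs its weight comparison, whereas you sweep over all root types $e_a-e_b$, $e_a+e_b$, $e_a$ (resp.\ $2e_a$) directly, which is longer but does not rely on that unproved height bound.
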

    
     \begin{proof} We will prove this lemma for case (ii) and $(n+1)-r = 2m$. For other cases the proof is similar.
     	
     	Let $w > v_{\underline{i}} $ and $l(w)=l(v_{\underline{i}})+1$. Then $w=s_\beta v_{\underline{i}}$, for some positive root $\beta$ such that height of $\beta$ is less than or equal to $2$. So, $\beta$ is either $\alpha_j$ or $\alpha_j+\alpha_{j+1}$ for some simple root $\alpha_j$.
    
    {\bf Case 1}: $\beta =\alpha_j$.
    
     If $\beta = \alpha_{i_t}$ then $s_{\beta}v_{\underline{i}}(\omega_r)= \sum_{k\neq t}\alpha_{i_k}-\alpha_{i_t}$. Since $s_{\beta}v_{\underline{i}}(\omega_r) < v_{\underline{i}}(\omega_r)$ and $\omega_r$ is a dominant weight we have  $s_{\beta}v_{\underline{i}} >v_{\underline{i}}$. Since $\beta$ is a simple root we have $l(s_\beta v_{\underline{i}})=l(v_{\underline{i}})+1$.
    
   If $\beta = \alpha_{j}, j \neq i_t$, then $s_{\beta}v_{\underline{i}}(\omega_r) \geq v_{\underline{i}}(\omega_r)$. So $s_{\beta}v_{\underline{i}} \leq v_{\underline{i}}$ in $W^{I_r}$, a contradiction.

   
   {\bf Case 2}: $\beta = \alpha_j+\alpha_{j+1}$.
   
    If $j=i_t$ and $|i_{t+1}-i_t|\geq 3$ then  $s_{\beta}v_{\underline{i}}(\omega_r)= \sum_{k\neq t}\alpha_{i_k}-\alpha_{i_{t}+1}$. So $s_{\beta}v_{\underline{i}}(\omega_r)<v_{\underline{i}}(\omega_r)$ and hence  $s_{\beta}v_{\underline{i}}>v_{\underline{i}}$. Now we will show that $l(s_\beta v_{\underline{i}})=l(v_{\underline{i}})+1$ for this $\beta$.
  
  Note that $s_{\beta}v_{\underline{i}}=(-\underline{i},\hat{\underline{i^{\prime}}},[-n,-1]\backslash\{[-\underline{i}],[-\hat{\underline{i^{\prime}}}]\}\uparrow,[1,n]\backslash\{[\underline{i}],[\hat{\underline{i^{\prime}}}]\}\uparrow,-\hat{\underline{i^{\prime}}},\underline{i})$ where  $\hat{\underline{i^{\prime}}}=(i_1-1,i_2-1,\ldots,i_{t-1}-1,i_t+1,i_{t+1}-1\ldots,i_m-1)$. In $v_{\underline{i}}$, the position of $i_t+1$ is right to the position of $i_t-1$ and left to $i_t$ but in $s_{\beta}v_{\underline{i}}$, the position of $i_t$ remains unchanged and the positions of $i_t+1$ and $i_t-1$ are interchanged. Similarly in $v_{\underline{i}}$ the position of $-i_t-1$ is right to $-i_t$ and left to $-i_t+1$ but in $s_{\beta}v_{\underline{i}}$ the position of $-i_t$ remains unchanged and the positions of $-i_t-1$ and $-i_t+1$ are interchanged. So $inv(s_{\beta}v_{\underline{i}})=inv(v_{\underline{i}})+2$. Hence $l(s_{\beta}v_{\underline{i}})=l(v_{\underline{i}})+1$. 
   
  If $j=i_t$ and $|i_{t+1}-i_t|=2$ then since  $s_{\beta}v_{\underline{i}}(\omega_r)=v_{\underline{i}}(\omega_r)$, we have $s_{\beta}v_{\underline{i}}=v_{\underline{i}}$ in $W^{I_r}$, a contradiction.
  
  If $j, j+1 \neq i_t$ then $s_{\beta}v_{\underline{i}}(\omega_r) \geq v_{\underline{i}}(\omega_r)$. So $s_{\beta}v_{\underline{i}} \leq v_{\underline{i}}$ in $W^{I_r}$, a contradiction.
  
  {\bf Case 3.} $\beta = \alpha_{j-1}+\alpha_j$. In this case the proof is similar to the previous case.
 
  The converse part is clear from the definition of $v_{\underline{i}}$.
 \end{proof}
 
    \textbf{Proof of proposition $4.1$:}
    We prove case (ii). The proofs of other cases are similar.

We prove that for any $\underline{i}\in J_{m,[2,n]}$ there exists $v_{\underline{i}}\in W^{I_r}$ such that $v_{\underline{i}}(\omega_r)=\sum_{k=1}^m\alpha_{i_k}$.

Note that, $$\omega_r=2m(\alpha_1+\alpha_2+\cdots+\alpha_r)+
\displaystyle\sum_{i=1}^{2m-1}(2m-i)\alpha_{r+i}, 2 \leq r \leq n-1.$$

Now consider the partial order on $J_{m,[2,n]}$, given by $(i_1,i_2,\ldots,i_m)\leq(j_1,j_2,\ldots,j_m)$ if $i_k\leq j_k$, $\forall k$ and $(i_1,i_2,\ldots,i_m) < (j_1,j_2,\ldots,j_m)$ if $i_k \leq j_k$ $\forall k$ and $i_k<j_k$ for some $k$. We will prove the theorem by induction on this order.

For $(j_1,j_2,\ldots,j_m)=(n-(2m-2),n-(2m-4),\ldots,n-2,n)$, we have $v_{\underline{j}}(\omega_r)=([1,n]\backslash\{[\underline{j}],[\underline{j^{\prime}}]\},\underline{j},\underline{j^{\prime}})(\omega_r)=\displaystyle\sum_{t=1}^m\alpha_{n-2m+2t}$ where $\underline{j}=(j_1,j_2,\ldots,j_m)$ and $\underline{j^{\prime}}=(j_1-1,j_2-1,\ldots,j_m-1)$. For $(i_1,i_2,\ldots,i_m)\in J_{m,[2,n]}$ not maximal, there exists $t$ maximal such that $i_t<n-2m+2t$. Now $(i_1,i_2,\ldots,i_{t-1},i_t+1,i_{t+1},\ldots,i_m)\in J_{m,[2,n]}$ and $(i_1,i_2,\ldots,i_{t-1},i_t+1,i_{t+1},\ldots,i_m)>(i_1,i_2,\ldots,i_m)$. So by induction, there exists $w \in W^{I_r}$ such that $w(\omega_r)=\sum_{k\neq t} \alpha_{i_k}+\alpha_{1+i_t}$. Now $s_{1+i_t}s_{i_t}w(\omega_r)=\sum_{k=1}^m \alpha_{i_k}$. So for any $(i_1,i_2,\ldots,i_m)\in J_{m,[2,n]}$, there exists $v_{\underline{i}} \in W^{I_r}$ such that $v_{\underline{i}}\omega_r=\sum_{k=1}^m \alpha_{i_k}$.

For $\underline{i}\in J_{m,[2,n]}$, if there exists another $u_{\underline{i}} \in W^{I_r}$ such that $u_{\underline{i}}(\omega_r)=v_{\underline{i}}(\omega_r)$ we have $u_{\underline{i}}=v_{\underline{i}}$ in $W^{I_r}$.
This gives the uniqueness of $v_{\underline{i}}$.


Now we will prove that the $v_{\underline{i}}$'s in $W^{I_r}$ having this property are maximal.
    
   If $v_{\underline{i}}$ is not maximal, then there exists $\beta$ $\in$ $R^+$ such that $s_\beta v_{\underline{i}} > v_{\underline{i}}$ with $s_\beta v_{\underline{i}}(\omega_r) \geq 0$. We may assume that $l(s_\beta v_{\underline{i}})=l(v_{\underline{i}})+1$. So by Lemma 4.2, $\beta$ is either $\alpha_{i_t}$ or $\alpha_{i_t}+\alpha_{{i_t}+1}$ or $\alpha_{i_t}+\alpha_{{i_t}-1}$. 
    
    If $\beta = \alpha_{i_t}$, we have $s_\beta v_{\underline{i}}(\omega_r)= \sum_{k\neq t}\alpha_{i_k}-\alpha_{i_t}$ and if $\beta = \alpha_{i_t}+\alpha_{i_t+1}$, we have $s_\beta v_{\underline{i}}(\omega_r) =  \sum_{k\neq t}\alpha_{i_k}-\alpha_{i_{t+1}},$ for $|i_t-i_{t+1}| \geq 3$. Similarly, if $\beta = \alpha_{i_t}+\alpha_{i_t-1}$, we have $s_\beta v_{\underline{i}}(\omega_r) =  \sum_{k\neq t}\alpha_{i_k}-\alpha_{i_{t-1}},$ for $|i_t-i_{t-1}| \geq 3$. 
    
    So in all these cases, we see that $s_\beta v_{\underline{i}}$ is not greater than 0, a contradiction. Thus all $v_{\underline{i}}$'s are maximal having the property that $v_{\underline{i}}(\omega_r) \geq 0$.
    
   It remains to show that above listed $v$'s are the only maximal elements having the property that $v(\omega_r)\geq 0$. Let $\lambda=\sum_{t=1}^m\alpha_{i_t}$ be in the weight lattice such that $\langle\alpha_{i_k},\alpha_{i_k+1}\rangle\neq 0$ for some $k$. Let $w \in W^{I_r}$ be such that $w(\omega_r)=\lambda$. Note that $0 \leq s_{i_k}w(\omega_r)=\sum_{j\neq k} \alpha_{i_j}<\lambda$. Hence $s_{i_k}w>w$. This implies that $w$ is not maximal having the property that $w(\omega_r) \geq 0$.
    
   
  \begin{proposition}
  The set of all minimal $w$ in $W^{I_r}$ such that $w(\omega_r) \leq 0 $ for type $B_n$ and $C_n$ are the following:  
   \end{proposition}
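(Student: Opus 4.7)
The plan is to mirror the strategy used to establish Proposition 4.1 under the order-reversing symmetry $v \mapsto w$, where $v(\omega_r) \ge 0$ corresponds to $w(\omega_r) \le 0$. Because $\omega_r$ is dominant, the map $w \mapsto w(\omega_r)$ is order-reversing on $W^{I_r}$, so minimal $w$ with $w(\omega_r) \le 0$ should be indexed by the same combinatorial data $J_{m,[s,t]}$ that indexes maximal $v$ with $v(\omega_r) \ge 0$, with $w_{\underline{i}}(\omega_r)$ being the negative of a nonnegative integral combination of simple roots. My first step is therefore to write down, for each of the four cases ($r=1$; $2 \le r \le n-1$ with $(n+1)-r$ even; $2 \le r \le n-1$ with $(n+1)-r$ odd; $r=n$), an explicit candidate $w_{\underline{i}} \in W^{I_r}$ in one-line permutation notation, mirroring the formulas of Proposition 4.1, and verify directly that $w_{\underline{i}}(\omega_r)$ is the expected non-positive weight by expanding $\omega_r$ as a nonnegative combination of simple roots and applying the permutation.

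The second step is to prove the analog of Lemma 4.2 for downward Bruhat covers: I will characterize all $w' < w_{\underline{i}}$ with $l(w_{\underline{i}}) - l(w') = 1$ as those of the form $s_\beta w_{\underline{i}}$ for $\beta = \alpha_{i_t}$ or $\beta = \alpha_{i_t} \pm \alpha_{i_t \pm 1}$ subject to gap conditions $|i_t - i_{t\pm 1}| \ge 3$. This uses the length formula $l_B(\sigma) = (inv(\sigma)+neg(\sigma))/2$ together with the characterization of Bruhat order in Lemma 2.4, reducing each covering relation to a transposition in $S_{[-n,n]\setminus\{0\}}$. The inversion count computation is essentially the one performed in the proof of Lemma 4.2, reused with opposite sign conventions.

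With the covering description in hand, minimality of $w_{\underline{i}}$ follows by direct weight computation: for each of the allowed $\beta$, the value $s_\beta w_{\underline{i}}(\omega_r)$ picks up a positive simple root summand compared to $w_{\underline{i}}(\omega_r)$, so $s_\beta w_{\underline{i}}(\omega_r) \not\le 0$. Conversely, to show these are the only minimal elements, I will argue as at the end of the proof of Proposition 4.1. Suppose $w \in W^{I_r}$ satisfies $w(\omega_r) \le 0$ and write $-w(\omega_r) = \sum_t \alpha_{i_t}$ in the root lattice. If two indices $i_k, i_{k+1}$ are adjacent (so $\langle \alpha_{i_k}, \alpha_{i_{k+1}} \rangle \ne 0$), then applying $s_{i_k}$ (or $s_{i_k+1}$) strictly decreases $-w(\omega_r)$ while keeping it nonnegative, producing $s_{i_k}w < w$ with $s_{i_k}w(\omega_r) \le 0$, contradicting minimality. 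Hence the support of $-w(\omega_r)$ must correspond to a valid tuple in $J_{m,[s,t]}$, and by uniqueness of $W^{I_r}$-representatives with a prescribed $\omega_r$-image, $w$ coincides with the corresponding $w_{\underline{i}}$.

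The main obstacle I anticipate is bookkeeping in the permutation expressions of $w_{\underline{i}}$ across the four parity/boundary cases, especially in case (iii) where the fixed entry $1$ (or the half-coefficient $\tfrac{1}{2}\alpha_1$ for type $C_n$) complicates both the weight formula and the inversion-count verification. Carefully matching the formulas to those of Proposition 4.1, and checking that the special generator $s_1 = (1,-1)$ interacts with $w_{\underline{i}}$ as required, is where the argument will require the most attention; the remaining combinatorial content is routine once the candidate permutations are chosen correctly.
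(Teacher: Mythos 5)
Your plan is sound, but note that the paper itself does not prove this proposition: its entire proof is the single line ``for the proof of minimality of $w$ refer to \cite{KP}.'' You are therefore supplying an argument where the authors supply a citation. What you propose --- explicit candidates $w_{\underline{i}}$, verification of $w_{\underline{i}}(\omega_r)$, a downward-dual of Lemma 4.2 describing the covers below $w_{\underline{i}}$, the check that each such cover violates $w(\omega_r)\le 0$, and the adjacency argument for completeness --- is exactly the mirror image of the paper's proof of Proposition 4.1, and the steps go through; just make explicit that the reduction from an arbitrary $w'<w_{\underline{i}}$ with $w'(\omega_r)\le 0$ to a \emph{cover} of $w_{\underline{i}}$ with that property uses that $W^{I_r}$ is graded by length together with the monotonicity $u\le u'\Rightarrow u(\omega_r)\ge u'(\omega_r)$ in dominance order. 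Two further remarks. First, there is a shortcut you gesture at but do not exploit: in types $B_n$ and $C_n$ one has $w_0=-1$, so $u\mapsto w_0\,u\,w_{0,I_r}$ (with $w_{0,I_r}$ the longest element of $W_{I_r}$, which fixes $\omega_r$) is an order-reversing involution of $W^{I_r}$ with $(w_0 u w_{0,I_r})(\omega_r)=-u(\omega_r)$; it carries the maximal $v$ with $v(\omega_r)\ge 0$ of Proposition 4.1 bijectively onto the minimal $w$ with $w(\omega_r)\le 0$, giving existence, minimality and completeness in one stroke and leaving only the identification of the images with the stated one-line expressions and with $s_{i_1}\cdots s_{i_m}v_{\underline{i}}$. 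Second, your completeness step silently assumes that any $w(\omega_r)\le 0$ with pairwise non-adjacent support has the prescribed coefficients (all equal to $1$, apart from the special behaviour at $\alpha_1$); this is the same gap present in the paper's own proof of Proposition 4.1 and should be closed by describing which elements of the orbit $W\omega_r$ can actually occur, but it does not affect the overall correctness of your approach.
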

   (i) Let $r=1$. Then
   \[
    w=\left\{\begin{array}{lr}
    (-n, -(n-2), \ldots, -4, -2, 1, 3, \ldots, n-3, n-1),& \text{if n is even}\\
     (-n, -(n-2), \ldots, -3, -1, 2, 4, \ldots, n-3, n-1),& \text{if n is odd.}\\
    \end{array}\right.
    \]
   
   (ii) Let $2 \leq r \leq n-1$. For $(n+1)-r=2m$, $\underline{i} \in J_{m,[2, n]}$ and $v_{\underline{i}}(\omega_r) = \sum_{k=1}^m \alpha_{i_k}$, we have $w_{\underline{i}}=s_{i_1}s_{i_2}\ldots s_{i_m}v_{\underline{i}}=([1,n]\backslash \{[\underline{i}], [\underline{i^{\prime}}]\}\uparrow, -\underline{i}, \underline{i^{\prime}})$ where $\underline{i^{\prime}}=(i_1-1, i_2-1,\ldots,i_m-1)$.
   In this case $w_{\underline{i}}(\omega_r)=-v_{\underline{i}}(\omega_r)$.
   
   (iii) Let $2 \leq r \leq n-1$. For $(n+1)-r=2m+1$, $\underline{i} \in J_{m,[3, n]}$ and $v_{\underline{i}}(\omega_r) = \alpha_1+ \sum_{k=1}^m \alpha_{i_k}$, we have $w_{\underline{i}}=s_1s_{i_1}s_{i_2}\ldots s_{i_m}v_{\underline{i}}= ([1,n]\backslash \{1, [\underline{i}], [\underline{i^{\prime}}]\}\uparrow, -\underline{i}, -1, \underline{i^{\prime}})$, where $\underline{i^{\prime}} = (i_1-1, i_2-1, \ldots, i_m-1) \in J_{m, [2, n-1]}$. In this case also $w_{\underline{i}}(\omega_r)=-v_{\underline{i}}(\omega_r)$. 
   
   (iv) For $r= n$, $w = (2, 3, \ldots, n-1, n, -1)$.
   \begin{proof} 
   For the proof of minimality of $w$ refer to \cite{KP}.
    \end{proof}
  
  \begin{proposition}
  Let $v$, $w$, $v_{\underline{i}}$, and $w_{\underline{j}}$ be as stated in Proposition $4.1$ and Proposition $4.3$ respectively.
 
(i) For $2 \leq r \leq n-1$, $X^{v_{\underline{i}}}_{w_{\underline{j}}}$ is nonempty if and only if $|i_k-j_k| \leq 1$ $\forall 1 \leq k \leq m$.

(ii) For $r=1, n$, $X^{v}_{w}$ is non-empty for any $v$ and $w$.
  \end{proposition}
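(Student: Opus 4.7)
The plan begins with the standard fact that a Richardson variety $X^v_w$ is non-empty precisely when $v \leq w$ in the Bruhat order, so the proposition reduces entirely to Bruhat-order comparisons among the explicit permutations written down in Propositions 4.1 and 4.3. Case (ii) is essentially immediate: when $r = n$ one has $w = s_1 v$, a single simple-reflection covering; when $r = 1$ the unique $v$ and $w$ differ by a short, explicit sequence of simple reflections carrying $v$ up to $w$, each step raising the length by one, which is verified by the inversion-count formula (2.1).

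For case (i) I would pass, via Lemma 2.4, to the Bruhat order of $S_{[-n, n]\setminus\{0\}}$ and use the standard tableau (rank) criterion $\sigma \leq \tau \iff r_\sigma(a, b) \leq r_\tau(a, b)$ for all $(a, b)$, where $r_\sigma(a, b) = |\{c \leq a : \sigma(c) \geq b\}|$. For the necessity direction, assuming $|i_k - j_k| \geq 2$ for some $k$, I would exhibit a violating pair by taking $a = n - m + k$ (the position at which $v_{\underline{i}}$ places the value $i_k$) and an integer $b$ strictly between $i_k$ and $j_k - 1$; a direct count from the one-line notations $v_{\underline{i}} = ([1,n]\setminus\{[\underline{i}], [\underline{i^{\prime}}]\}\uparrow, -\underline{i^{\prime}}, \underline{i})$ and $w_{\underline{j}} = ([1,n]\setminus\{[\underline{j}], [\underline{j^{\prime}}]\}\uparrow, -\underline{j}, \underline{j^{\prime}})$ then yields $r_{v_{\underline{i}}}(a, b) > r_{w_{\underline{j}}}(a, b)$, blocking $v_{\underline{i}} \leq w_{\underline{j}}$. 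The odd subcase (iii) is handled identically, with the additional $\pm 1$ entries incorporated into the counts.

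For the sufficiency direction, assume $|i_k - j_k| \leq 1$ for every $k$. I would construct an explicit chain from $v_{\underline{i}}$ to $w_{\underline{j}}$ by applying one reflection per coordinate $k$, drawn from the list appearing in Lemma 4.2: namely $s_{\alpha_{i_k}}$ when $j_k = i_k$, $s_{\alpha_{i_k} + \alpha_{i_k + 1}}$ when $j_k = i_k + 1$, and $s_{\alpha_{i_k - 1} + \alpha_{i_k}}$ when $j_k = i_k - 1$. Because consecutive entries of $\underline{i}$ and $\underline{j}$ differ by at least $2$ and the individual discrepancies $|i_k - j_k|$ are at most $1$, the supports of these reflections are essentially disjoint and they can be applied in an order producing a valid Bruhat covering at each step. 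The main obstacle is the length bookkeeping: verifying via formula (2.1) that each such reflection, applied in turn to the current intermediate permutation, raises the inversion count by exactly one, and in particular handling the tight-spacing case $|i_{k+1} - i_k| = 2$, where the height-two root reflection of Lemma 4.2 fails to cover $v_{\underline{i}}$ directly and must be replaced by a reordered pair of reflections. A clean execution likely proceeds by induction on the number of indices at which $\underline{i}$ and $\underline{j}$ differ, reducing each step to a single-coordinate update whose effect on the inversion table is a localised pair of swaps exactly as in Case 2 of the proof of Lemma 4.2.
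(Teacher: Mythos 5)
Your reduction to Bruhat comparisons, your treatment of case (ii), and your sufficiency argument for case (i) essentially reproduce what the paper does: the paper also writes $w_{\underline j}$ as an explicit product $\prod_{t:j_t=i_t}s_{\alpha_{i_t}}\prod_{t:j_t=i_t-1,\downarrow}s_{\alpha_{i_t}+\alpha_{i_t-1}}\prod_{t:j_t=i_t+1,\uparrow}s_{\alpha_{i_t}+\alpha_{i_t+1}}\,v_{\underline i}$, orders the factors exactly as you indicate, and uses the mutual disjointness of the supports together with the inversion count $(2.1)$ to check that each factor is a cover. So that half is fine and is the same route.

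The necessity direction is where you diverge from the paper, and where there is a genuine gap. The paper argues by length: $l(w_{\underline j})-l(v_{\underline i})=m$, so a saturated chain from $v_{\underline i}$ to $w_{\underline j}$ consists of $m$ covers, each (by Lemma 4.2) given by a reflection $s_{\alpha_{i_t}}$ or $s_{\alpha_{i_t}+\alpha_{i_t\pm 1}}$, and comparing the resulting weight with $w_{\underline j}(\omega_r)$ forces $|i_k-j_k|\le 1$. Your rank-criterion route could in principle be made to work, but the violating pair you specify does not exist in the critical case. Take $B_5$, $r=4$, $m=1$, $\underline i=(2)$, $\underline j=(4)$, so $v_{(2)}=(3,4,5,-1,2)$ and $w_{(4)}=(1,2,5,-4,3)$. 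Your recipe gives $a=n-m+k=5$; but $r_\sigma(n,b)$ simply counts how many of the $2n$ values are $\ge b$ and is therefore the same for every signed permutation, so no choice of $b$ can produce a violation at $a=n$. Worse, there is no integer strictly between $i_k=2$ and $j_k-1=3$, so your prescribed $b$ fails to exist precisely when $|i_k-j_k|=2$, which is the borderline case you must exclude. A violation does exist here, e.g.\ $r_{v_{(2)}}(3,3)=3>2=r_{w_{(4)}}(3,3)$, but it occurs at the end of the increasing leading block of the one-line notation, not at the position carrying the value $i_k$; the case $j_k\le i_k-2$ additionally requires working with the negative positions. As written, the key step of your necessity proof fails; you would need either to locate the correct pair $(a,b)$ in general or to replace this step by the paper's length-counting argument.
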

    
  \begin{proof} We will prove this lemma for $2 \leq r \leq n-1$ and $(n+1)-r=2m$. For other cases the proof is similar.
  
  Let $X^{v_{\underline{i}}}_{w_{\underline{j}}}$ be nonempty. So $v_{\underline{i}} < w_{\underline{j}}$. Since $l(v_{\underline{i}}) = l(v_{\underline{j}})$ we have $l(w_{\underline{j}})-l(v_{\underline{i}})=m$. By the repeated application of Lemma 4.2 we have $w_{\underline{j}} = \displaystyle \prod_{ \mathbf{card}\{\beta\} = m}s_\beta v_{\underline{i}}$, for some $\beta$ such that $\beta$'s is either $\alpha_{i_t}$ or $\alpha_{i_t}+\alpha_{{i_t}+1}$ or $\alpha_{i_t}+\alpha_{{i_t}-1}$ for some $i_t$. So $w_{\underline{j}}(\omega_r) = -\displaystyle \sum_{{j_k}:|i_k-j_k| \leq 1} \alpha_{j_k}$. Hence, $|i_k-j_k| \leq 1$ $\forall 1 \leq k \leq m$.
  
  Conversely, let $|i_k-j_k| \leq 1$ for all $1 \leq k \leq m$. So, $w_{\underline{j}}=\displaystyle \prod_{t\in \{t:i_t=j_t\}}s_{\alpha_{i_t}}$ $ \displaystyle \prod_{t\in\{t:j_t=i_t-1\}\downarrow}s_{\alpha_{i_t}+\alpha_{i_t-1}}$ $ \displaystyle \prod_{t\in\{t:j_t=i_t+1\}\uparrow}s_{\alpha_{i_t}+\alpha_{i_t+1}}$ $v_{\underline{i}}$. The arrows $\uparrow$ and $\downarrow$ denote that the reflections in the product are applied in increasing and decreasing order of $t$ respectively. Since $\underline{i}, \underline{j} \in J_{m,[2,n]}$ and $|i_k-j_k|\leq 1$, we see that the sets $\{\bigcup\limits_{\{t:j_t=i_t+1\}}\{i_t-1,i_t,i_t+1\}\}$, $\{\bigcup\limits_{\{t:j_t=i_t-1\}}\{i_t-2,i_t-1,i_t\}\}$ and $\{\bigcup\limits_{\{t:j_t=i_t\}}\{i_t-1,i_t\}\}$ are mutually disjoint.
  
  
  In the first step we see that when we multiply $v_{\underline{i}}$ by $s_{\alpha_{i_t}+\alpha_{i_t+1}}$ in increasing order of $t \in \{t: j_t=i_t+1\}$ then after each multiplication the product is greater than $v_{\underline{i}}$ and the length increases by $1$. Let $t$ be maximal such that $j_t=i_t+1$. By lemma $4.2$,  $s_{\alpha_{i_t}+\alpha_{i_t+1}}v_{\underline{i}}>
  v_{\underline{i}}$ and $l(s_{\alpha_{i_t}+\alpha_{i_t+1}}v_{\underline{i}})=
  l(v_{\underline{i}})+1$. 
  
  Let $t$ be such that $i_{t-1}=i_t-2$ and $j_{t-1}=i_{t-1}+1$. Note that $s_{\alpha_{i_t}+\alpha_{i_t+1}}v_{\underline{i}}=(-\underline{i},\hat{\underline{i^{\prime}}},[-n,-1]\backslash\{[-\underline{i}],[-\hat{\underline{i^{\prime}}}]\}\uparrow,[1,n]\backslash\{[\underline{i}],[\hat{\underline{i^{\prime}}}]\}\uparrow,-\hat{\underline{i^{\prime}}},\underline{i})$, where  $\hat{\underline{i^{\prime}}}=(i_1-1,i_2-1,\ldots,i_t-3,i_t+1,i_{t+1}-1\ldots,i_m-1)$ and $s_{\alpha_{i_t-2}+\alpha_{i_t-1}}s_{\alpha_{i_t}+\alpha_{i_t+1}}
  v_{\underline{i}} = (-\underline{i},\hat{\hat{\underline{i}^{\prime}}},[-n,-1]\backslash\{[-\underline{i}],[-\hat{\hat{\underline{i}^{\prime}}}]\}\uparrow,[1,n]\backslash\{[\underline{i}],[\hat{\hat{\underline{i}^{\prime}}}]\}\uparrow,-\hat{\hat{\underline{i}^{\prime}}},\underline{i})$, where $\hat{\hat{\underline{i}^{\prime}}}=(i_1-1,i_2-1,\ldots,i_t-1,i_t+1,i_{t+1}-1\ldots,i_m-1)$. In $s_{\alpha_{i_t}+\alpha_{i_t+1}}v_{\underline{i}}$, the position of $i_t-3$ is left to the positions of both $i_t-1$ and $i_t-2$ but in $s_{\alpha_{i_t-2}+\alpha_{i_t-1}}s_{\alpha_{i_t}+\alpha_{i_t+1}}
  v_{\underline{i}}$, the position of $i_t-2$ remains unchanged and the positions of $i_t-1$ and $i_t-3$ are interchanged. Similarly the positions of $-i_t+1$ and $-i_t+3$ are also interchanged in $s_{\alpha_{i_t-2}+\alpha_{i_t-1}}s_{\alpha_{i_t}+\alpha_{i_t+1}}
  v_{\underline{i}}$. So $inv(s_{\alpha_{i_t-2}+\alpha_{i_t-1}}
  s_{\alpha_{i_t}+\alpha_{i_t+1}}
  v_{\underline{i}})=inv(s_{\alpha_{i_t}+\alpha_{i_t+1}}
  v_{\underline{i}})+2$. Hence $l(s_{\alpha_{i_t-2}+\alpha_{i_t-1}}
  s_{\alpha_{i_t}+\alpha_{i_t+1}}
  v_{\underline{i}})=l(s_{\alpha_{i_t}+\alpha_{i_t+1}}v_{\underline{i}})+1$. By lemma $2.3$ we see that $s_{\alpha_{i_t}+\alpha_{i_t+1}}
  v_{\underline{i}}<s_{\alpha_{i_t-2}+\alpha_{i_t-1}}s_{\alpha_{i_t}+\alpha_{i_t+1}}
  v_{\underline{i}}$. 
  
   Repeating this process we can see that $\displaystyle\prod_{t\in\{t:j_t=i_t+1\}\uparrow}s_{\alpha_{i_t}+\alpha_{i_t+1}} v_{\underline{i}} > v_{\underline{i}}$ and the length is increased by the number of reflections multiplied.
  

   Since $\{\bigcup\limits_{\{t:j_t=i_t+1\}}\{i_t-1,i_t,i_t+1\}\}, \{\bigcup\limits_{\{t:j_t=i_t-1\}}\{i_t-2,i_t-1,i_t\}\}$ and $\{\bigcup\limits_{\{t:j_t=i_t\}}\{i_t-1,i_t\}\}$ are mutually disjoint, by repeating the above process we see that $\displaystyle\prod_{\{{t:j_t=i_t}\}}s_{\alpha_{i_t}} \prod_{t\in\{t:j_t=i_t-1\}\downarrow}s_{\alpha_{i_t}+\alpha_{i_t-1}} \\ \prod_{t\in\{t:j_t=i_t+1\}\uparrow} s_{\alpha_{i_t}+\alpha_{i_t+1}} v_{\underline{i}}>v_{\underline{i}}$ and the length is increased by the number of reflections multiplied. So $w_{\underline{j}} > v_{\underline{i}}$ and hence $X^{v_{\underline{i}}}_{w_{\underline{j}}}$ is nonempty.
  \end{proof}

{\bf Remark:} Note that $\underline{i}$ denotes the positions of the simple roots with nonzero coefficients in $v_{\underline{i}}(\omega_r)$ and similarly, $\underline{j}$ denotes the positions of the simple roots with nonzero coefficients in $w_{\underline{j}}(\omega_r)$. 
 
  \begin{theorem}
   Let $v$, $w$, $v_{\underline{i}}$ and $w_{\underline{j}}$ be as stated in Proposition $4.1$ and Proposition $4.3$ respectively.
 
(i) For $2 \leq r \leq n-1$, $(X{^{v_{\underline{i}}}_{w_{\underline{j}}}})^{ss}_T(\mathcal{L}_r)$ is nonempty if and only if $\underline{i} = \underline{j}$.

(ii) For $r =1$ and $n$, $(X^v_w)^{ss}_T$ is non-empty for any $v$ and $w$.
  
  \end{theorem}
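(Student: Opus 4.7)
Part (ii), the cases $r=1$ and $r=n$, reduces to explicit computation. The plan is to verify that the unique $v$ and $w$ listed in Propositions 4.1 and 4.3 satisfy $v(\omega_r)+w(\omega_r)=0$: for $r=n$ one has $w=s_1 v$, and a direct check in the excerpt's convention gives $v(\omega_n)=\epsilon_1$ and $w(\omega_n)=-\epsilon_1$; for $r=1$, a term-by-term computation in each parity of $n$ yields $w(\omega_1)=-v(\omega_1)$. Then Theorem 2.2 produces $p_v p_w$ as a nonzero basis element of $H^0(X^v_w,\mathcal{L}_r^2)$, and its $T$-weight $v(\omega_r)+w(\omega_r)=0$ makes it $T$-invariant, so $(X^v_w)^{ss}_T\neq\emptyset$.

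For the sufficient direction of Part (i), if $\underline{i}=\underline{j}$ then Proposition 4.4 gives nonemptiness of $X^{v_{\underline{i}}}_{w_{\underline{j}}}$, and by Theorem 2.2 the product $p_{v_{\underline{i}}}p_{w_{\underline{j}}}\in H^0(X^{v_{\underline{i}}}_{w_{\underline{j}}},\mathcal{L}_{2\omega_r})$ is a nonzero basis element with $T$-weight $\sum_k\alpha_{i_k}-\sum_k\alpha_{j_k}=0$, which supplies the required $T$-invariant section.

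The necessary direction is the main obstacle. Suppose $(X^{v_{\underline{i}}}_{w_{\underline{j}}})^{ss}_T(\mathcal{L}_r)\neq\emptyset$, so that there exist $N\geq 1$ and a nonzero $T$-invariant section $s\in H^0(X^{v_{\underline{i}}}_{w_{\underline{j}}},\mathcal{L}_r^N)$. Since the standard monomials of Theorem 2.2 are $T$-weight vectors, expanding $s$ in that basis forces some $p_\pi$ in its support to have $T$-weight zero. The weight of $p_\pi$ is $N$ times a convex combination of the weights $\{\tau(\omega_r):\tau\in[v_{\underline{i}},w_{\underline{j}}]\cap W^{I_r}\}$, so the origin lies in the convex hull of these weights.

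By Lemma 4.2 and the proof of Proposition 4.4, the interval $[v_{\underline{i}},w_{\underline{j}}]\cap W^{I_r}$ is a Boolean lattice parametrized by subsets $S\subseteq\{1,\ldots,m\}$, with $\tau_S(\omega_r)=\sum_{k\notin S}\alpha_{i_k}-\sum_{k\in S}\alpha_{j_k}$. A convex combination $\sum_S c_S\tau_S(\omega_r)=0$ translates, upon setting $p_k=\sum_{S\ni k}c_S\in[0,1]$, into the equation $\sum_k(1-p_k)\alpha_{i_k}=\sum_k p_k\alpha_{j_k}$. The spacing hypotheses on $\underline{i},\underline{j}\in J_{m,[\cdot]}$ together with $|i_k-j_k|\leq 1$ force $i_k\neq j_{k'}$ whenever $k\neq k'$, so the coefficient equation decouples at each simple root: for every index $l$ with $i_l\neq j_l$, matching the coefficient of $\alpha_{i_l}$ forces $p_l=1$ while matching the coefficient of $\alpha_{j_l}$ forces $p_l=0$, a contradiction. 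Hence $\underline{i}=\underline{j}$.
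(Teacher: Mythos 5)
Your proof is correct and follows essentially the same route as the paper: the sufficient direction via the $T$-invariant section $p_{v_{\underline{i}}}p_{w_{\underline{j}}}$, and the necessary direction by showing that no standard monomial supported on the interval $[v_{\underline{i}},w_{\underline{j}}]$ can have weight zero, using the decoupling of the coefficients of $\alpha_{i_t}$ and $\alpha_{j_t}$. Your convex-combination formulation of that last step is just a repackaging (arguably a slightly cleaner one, since it accommodates the rational weights of LS paths) of the paper's argument that no chain $v_{\underline{i}}=u_1\leq\cdots\leq u_k=w_{\underline{j}}$ satisfies $\sum_l u_l(\omega_r)=0$.
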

  
  \begin{proof} We will prove this theorem for case (i). Proof of case (ii) is similar. Let $\underline{i} = \underline{j}$. Then we have $v_{\underline{i}}(\omega_r)$+$w_{\underline{j}}(\omega_r)=0$. So $p_{v_{\underline{i}}}p_{w_{\underline{j}}}$ is a non-zero $T$-invariant section of $\mathcal{L}_r$ on $G/P_r$ which does not vanish identically on $X^{v_{\underline{i}}}_{w_{\underline{j}}}$. Hence, $(X^{v_{\underline{i}}}_{w_{\underline{j}}})^{ss}_T (\mathcal{L}_r)$ is non-empty.

Conversely, if $\underline{i} \neq \underline{j}$, then there exists $t$ such that $j_t \neq i_t$. Since $X^{v_{\underline{i}}}_{w_{\underline{j}}} \neq \emptyset$, by Proposition $4.4$ we have $j_t = i_t+1$ or $j_t = i_t-1$. If $j_t = i_t+1$ then $w_{\underline{j}}(\omega_r)$ = $-\sum_{k \neq t}{\alpha_{i_k}}-\alpha_{i_t+1}$ and if $j_t = i_t-1$ then $w_{\underline{j}}(\omega_r)$ = $-\sum_{k \neq t}{\alpha_{i_k}}-\alpha_{i_t-1}$. Let $u \in W^{I_r}$ be such that $v_{\underline{i}} \leq u \leq w_{\underline{j}}$. Then $u$ is of the form $u = (\displaystyle \prod_{\beta}s_\beta) v_{\underline{i}}$, where $\beta$'s are some positive roots. For $j_t=i_t+1$ at most one $\beta$ can be $\alpha_{i_t}+\alpha_{i_t+1}$ and none of the other $\beta$'s can contain $\alpha_{i_t}$ or $\alpha_{i_t+1}$ as a summand. So in $u(\omega_r)$, the coefficient of $\alpha_{i_t}$ is either zero or one and the coefficient of $\alpha_{i_t+1}$ is either zero or $-1$. Similarly for $j_t=i_t-1$ at most one $\beta$ can be $\alpha_{i_t}+\alpha_{i_t-1}$ and none of the other $\beta$'s can contain $\alpha_{i_t}$ or $\alpha_{i_t-1}$ as a summand. So in $u(\omega_r)$ the coefficient of $\alpha_{i_t}$ is either zero or one and the coefficient of $\alpha_{i_t-1}$ is either zero or $-1$. For $j_t=i_t+1$, $u(\omega_r)$ contains either $\alpha_{i_t}$ or $\alpha_{i_t+1}$ as a summand and for $j_t=i_t-1$, $u(\omega_r)$ contains either $\alpha_{i_t}$ or $\alpha_{i_t-1}$ as a summand. Hence there  does not exist a sequence $v_{\underline{i}} = u_1 \leq u_2 \leq \ldots \leq u_k = w_{\underline{j}}$ such that $\sum_{l=1}^ku_l(\omega_r)=0$ and so we don't have a non-zero $T$- invariant section of $\mathcal L_r$ which is not identically zero on $X^{v_{\underline{i}}}_{w_{\underline{j}}}$. So, we conclude that the set $(X_{w_{\underline{j}}}^{v_{\underline{i}}})^{ss}_T(\mathcal{L}_r)$ is empty.  
 \end{proof}

We illustrate Proposition $4.4$ and Theorem $4.5$ with an example.  
 
{\bf Example:} $B_5$, $\omega_4 = (2,2,2,2,1)$
 
 $\underline{i}$ \hspace{2.7cm} $v_{\underline{i}}$ \hspace{2.2cm} $v_{\underline{i}}(\omega_2)$ \hspace{2cm} $w_{\underline{i}}(\omega_2)$ \hspace{2cm} $w_{\underline{i}}$
 
 (2) \hspace{1.5cm} $(3,4,5,-1,2)$ \hspace{.5cm} $(0,1,0,0,0)$ \hspace{1cm} $(0,-1,0,0,0)$ \hspace{.5cm} $(3,4,5,-2,1)$

(3) \hspace{1.5cm} $(1,4,5,-2,3)$ \hspace{.5cm} $(0,0,1,0,0)$ \hspace{1cm} $(0,0,-1,0,0)$ \hspace{.5cm} $(1,4,5,-3,2)$

(4) \hspace{1.5cm} $(1,2,5,-3,4)$ \hspace{.5cm} $(0,0,0,1,0)$ \hspace{1cm} $(0,0,0,-1,0)$ \hspace{.5cm} $(1,2,5,-4,3)$

(5) \hspace{1.5cm} $(1,2,3,-4,5)$ \hspace{.5cm} $(0,0,0,0,1)$ \hspace{1cm} $(0,0,0,0,-1)$ \hspace{.5cm} $(1,2,3,-5,4)$

 So from the above observation, $X_{w_{(2)}}^{v_{(2)}}$, $X_{w_{(2)}}^{v_{(3)}}$, $X_{w_{(3)}}^{v_{(2)}}$, $X_{w_{(3)}}^{v_{(3)}}$, $X_{w_{(3)}}^{v_{(4)}}$, $X_{w_{(4)}}^{v_{(3)}}$ and $X_{w_{(4)}}^{v_{(4)}}$, $X_{w_{(4)}}^{v_{(5)}}$, $X_{w_{(5)}}^{v_{(4)}}$, $X_{w_{(5)}}^{v_{(5)}}$ are all non-empty. We have $(X_{w_{(2)}}^{v_{(3)}})^{ss}_T(\mathcal{L}_4)$, $(X_{w_{(3)}}^{v_{(2)}})^{ss}_T(\mathcal{L}_4)$,
 $(X_{w_{(3)}}^{v_{(4)}})^{ss}_T(\mathcal{L}_4)$,
 $(X_{w_{(4)}}^{v_{(3)}})^{ss}_T(\mathcal{L}_4)$, $(X_{w_{(4)}}^{v_{(5)}})^{ss}_T(\mathcal{L}_4)$ and $(X_{w_{(5)}}^{v_{(4)}})^{ss}_T(\mathcal{L}_4)$ are empty whereas  $(X_{w_{(2)}}^{v_{(2)}})^{ss}_T(\mathcal{L}_4)$, $(X_{w_{(3)}}^{v_{(3)}})^{ss}_T(\mathcal{L}_4)$,
 $(X_{w_{(4)}}^{v_{(4)}})^{ss}_T(\mathcal{L}_4)$ and 
 $(X_{w_{(5)}}^{v_{(5)}})^{ss}_T(\mathcal{L}_4)$ are non-empty.   

\section{Type D}

As in types $B$ and $C$ here also for a fundamental weight $\omega_r$ we describe all the maximal $v \in W^{I_r}$ and minimal $w \in W^{I_r}$ such that $v(\omega_r) \geq 0$ and $w (\omega_r) \leq 0$ and then we use the same techniques to describe $v,w \in W^{I_r}$ for which  the Richardson variety $X_w^v$ have nonempty semistable locus for the action of a maximal torus $T$ and with respect to the line bundle $\mathcal L_r$.  
  
\begin{proposition}
 
     Let $G$ be of type $D_n$. Let $v \in W^{I_r}$ be maximal such that $v(\omega_r) \geq 0$. Then the description of $v$ is the following:  
     
      (i) For $r=1$, \,\, $v(4\omega_{1})=\left\{\begin{array}{lr}
      \
      2\alpha_2+2\sum_{i=2}^{\frac{n}{2}}\alpha_{2i},& n\equiv 0(mod \,\, 4) \\
      2\alpha_1+2\sum_{i=2}^{\frac{n}{2}}\alpha_{2i},& n\equiv 2(mod \,\, 4) \\
      \alpha_1+3\alpha_2+2\alpha_3+2\sum_{i=2}^{\frac{n-1}{2}}\alpha_{2i+1},& n\equiv 1(mod \,\, 4) \\
      3\alpha_1+\alpha_2+2\alpha_3+2\sum_{i=2}^{\frac{n-1}{2}}\alpha_{2i+1},& n\equiv 3(mod \,\, 4). 
      \end{array}\right.$
      
       where $v=\left\{\begin{array}{lr}
       \
       (-(n-1),-(n-3),\ldots,-3,-1,2,4,6,\ldots,n-2,n),& n\equiv 0(mod \,\, 4) \\
       (-(n-1),-(n-3),\ldots,-3,1,2,4,6,\ldots,n-2,n),& n\equiv 2(mod \,\, 4) \\
       (-(n-1),-(n-3),\ldots,-4,-1,2,3,5,\ldots,n-2,n),& n\equiv 1(mod \,\, 4) \\
       (-(n-1),-(n-3),\ldots,-4,1,2,3,5,\ldots,n-2,n),& n\equiv 3(mod \,\, 4). 
       \end{array}\right.$
       
       (ii) For $r=2$, $v(4\omega_{2})=\left\{\begin{array}{lr}
       \
       2\alpha_1+2\sum_{i=2}^{\frac{n}{2}}\alpha_{2i},& n\equiv 0(mod \,\, 4) \\
       2\alpha_2+2\sum_{i=2}^{\frac{n}{2}}\alpha_{2i},& n\equiv 2(mod \,\, 4) \\
       3\alpha_1+\alpha_2+2\alpha_3+2\sum_{i=2}^{\frac{n-1}{2}}\alpha_{2i+1},& n\equiv 1(mod \,\, 4) \\
       \alpha_1+3\alpha_2+2\alpha_3+2\sum_{i=2}^{\frac{n-1}{2}}\alpha_{2i+1},& n\equiv 3(mod \,\, 4),
       \end{array}\right.$ and  \\
       
        where $v=\left\{\begin{array}{lr}
       \
       ((n-1),-(n-3),\ldots,-3,1,2,4,6,\ldots,n-2,n),& n\equiv 0(mod \,\, 4) \\
       ((n-1),-(n-3),\ldots,-3,-1,2,4,6,\ldots,n-2,n),& n\equiv 2(mod \,\, 4) \\
       ((n-1),-(n-3),\ldots,-4,1,2,3,5,\ldots,n-2,n),& n\equiv 1(mod \,\, 4) \\
       ((n-1),-(n-3),\ldots,-4,-1,2,3,5,\ldots,n-2,n),& n\equiv 3(mod \,\, 4). 
       \end{array}\right.$
       
      (iii) Let $3 \leq r \leq n-1$. For $(n+1)-r = 2m$ and for any $\underline{i} = (i_1,i_2,\ldots,i_m) \in J_{m,[1,n]} \backslash Z$, there exists an unique $v_{\underline{i}} \in W^{I_r}$ such that $v_{\underline{i}}(\omega_r) = \displaystyle\sum_{k=1}^m \alpha_{i_k}$, where $Z = \{(1,3,i_1,i_2,\ldots,i_{m-2}):i_k \in \{5,\ldots,n-1,n\}$ and $i_{k+1}-i_k \geq 2, \forall k\}$. 
      
     (a) For $\underline{i} \in J_{m,[3,n]}$, $v_{\underline{i}}(\omega_r) = \displaystyle\sum_{k=1}^m \alpha_{i_k}$, $v_{\underline{i}}=\left\{\begin{array}{lr}
     \
     (-1,[2,n]\backslash\{[\underline{i}],[\underline{i^{\prime}}]\}\uparrow,-\underline{i^{\prime}},\underline{i}),& \text{m is odd} \\
     (1,[2,n]\backslash\{[\underline{i}],[\underline{i^{\prime}}]\}\uparrow,-\underline{i^{\prime}},\underline{i}),& \text{m is even}. 
          \end{array}\right.$  
   
        (b) For $\underline{i} \in J_{m-1,[4,n]}$, $v_{1,\underline{i}}(\omega_r) = \alpha_1+\displaystyle\sum_{k=1}^{m-1} \alpha_{i_k}$, with \\
        $v_{1,\underline{i}}=\left\{\begin{array}{lr}
     \
     ([3,n]\backslash\{[\underline{i}],[\underline{i^{\prime}}]\}\uparrow,-\underline{i^{\prime}},1,2,\underline{i}),& \text{m is odd} \\
     (-t,[3,n]\backslash\{t,[\underline{i}],[\underline{i^{\prime}}]\}\uparrow,-\underline{i^{\prime}},1,2, \underline{i}),& \text{m is even}. 
          \end{array}\right.$
         where $t=min\{[3,n]\backslash\{[\underline{i}],[\underline{i^{\prime}}]\}\}$.  
            
    
     (c) For $\underline{i} \in J_{m-1,[4,n]}$, $v_{2,\underline{i}}(\omega_r) = \alpha_2+\displaystyle\sum_{k=1}^{m-1} \alpha_{i_k}$ with \\ 
     $v_{2,\underline{i}}=\left\{\begin{array}{lr}
     \
     (-t,[3,n]\backslash\{t,[\underline{i}],[\underline{i^{\prime}}]\}\uparrow,-\underline{i^{\prime}},-1,2,\underline{i}),& \text{m is odd} \\
     ([3,n]\backslash\{[\underline{i}],[\underline{i^{\prime}}]\}\uparrow,-\underline{i^{\prime}},-1,2, \underline{i}),& \text{m is even}, 
     \end{array}\right.$ where $t=min\{[3,n]\backslash\{[\underline{i}],[\underline{i^{\prime}}]\}\}.$
     
    (iv) Let $3 \leq r \leq n-1$. For $(n+1)-r=2m+1$ and for any $\underline{i}=(i_1,i_2,\ldots,i_m) \in J_{m,[4,n]}$, there exists unique $v_{\underline{i}} \in W^{I_r}$ such that $v_{\underline{i}}(\omega_r) =\frac{1}{2}\alpha_{1}+\frac{1}{2}\alpha_{2}+\displaystyle\sum_{k=1}^{m} \alpha_{i_k}$. Also, for any $\underline{i}=(i_1,i_2,\ldots,i_{m-1}) \in J_{m-1,[5,n]}$, there exists unique $v_{\underline{i},1} \in W^{I_r}$ such that $v_{\underline{i},1}(\omega_r)=\frac{3}{2}\alpha_{1}+\frac{1}{2}\alpha_{2}+\alpha_{3}+\displaystyle\sum_{k=1}^{m-1} \alpha_{i_k}$ and there exists unique $v_{\underline{i},2} \in W^{I_r}$ such that $v_{\underline{i},2}(\omega_r)=\frac{1}{2}\alpha_1+\frac{3}{2}\alpha_{2}+\alpha_{3}+\displaystyle\sum_{k=1}^{m-1} \alpha_{i_k}$. We have:
     
 (a)  $v_{\underline{i}}=\left\{\begin{array}{lr}
	\
	(-1,[3,n]\backslash\{[\underline{i}],[\underline{i^{\prime}}]\}\uparrow,-\underline{i^{\prime}},2,\underline{i}),& \text{m is odd} \\
	(1,[3,n]\backslash\{[\underline{i}],[\underline{i^{\prime}}]\}\uparrow,-\underline{i^{\prime}},2,\underline{i}),& \text{m is even}. 
     \end{array}\right.$

(b) $v_{\underline{i},1}=\left\{\begin{array}{lr}
\
(4,[5,n]\backslash\{[\underline{i}],[\underline{i^{\prime}}]\}\uparrow,-\underline{i^{\prime}},1,2,3,\underline{i}),& \text{m is odd} \\
(-4,[5,n]\backslash\{[\underline{i}],[\underline{i^{\prime}}]\}\uparrow,-\underline{i^{\prime}},1,2,3,\underline{i}),& \text{m is even}. 
\end{array}\right.$
        
 (c) $v_{\underline{i},2}=\left\{\begin{array}{lr}
 \
 (-4,[5,n]\backslash\{[\underline{i}],[\underline{i^{\prime}}]\}\uparrow,-\underline{i^{\prime}},-1,2,3,\underline{i}),& \text{m is odd} \\
 (4,[5,n]\backslash\{[\underline{i}],[\underline{i^{\prime}}]\}\uparrow,-\underline{i^{\prime}},-1,2,3,\underline{i}),& \text{m is even}. 
 \end{array}\right.$

 (v) For $r=n$, $v=(1,3,4,5,\ldots,n-1,n,2)$ with $v(\omega_n)=\frac{1}{2}\alpha_1+\frac{1}{2}\alpha_2$.

      \end{proposition}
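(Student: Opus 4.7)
The overall plan is to adapt the strategy used for Proposition 4.1 in types $B$ and $C$ to the type $D$ setting, where the presence of the branching node at $\alpha_{3}$ in the Dynkin diagram forces additional casework. I would first establish an analogue of Lemma 4.2 for $D_{n}$: given any candidate $v_{\underline{i}}$ from the list, I would describe all positive roots $\beta$ such that $s_{\beta}v_{\underline{i}}\gtrdot v_{\underline{i}}$ in the Bruhat order with $\ell(s_{\beta}v_{\underline{i}})=\ell(v_{\underline{i}})+1$. Because covers at the level of the weight $\omega_{r}$ come from applying $s_{\beta}$ with $\mathrm{ht}(\beta)\le 2$, the relevant $\beta$ are the simple roots $\alpha_{j}$, the sums $\alpha_{j}+\alpha_{j+1}$ for $j\ge 2$, and crucially the two extra height-two roots $\alpha_{1}+\alpha_{3}$ and $\alpha_{2}+\alpha_{3}$ coming from the fork. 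The cover verification itself reduces to an inversion count using the formula $\ell_{D}(\sigma)=(\mathrm{inv}(\sigma)-\mathrm{neg}(\sigma))/2$ together with the combinatorial Bruhat criterion in Lemma 2.4.

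For existence, I would proceed case by case on $r$ and on the parity of $(n+1)-r$, mirroring the inductive argument given for types $B,C$. Put the partial order $\underline{i}\le\underline{j}$ componentwise on the appropriate index sets $J_{m,[1,n]}\setminus Z$, $J_{m-1,[4,n]}$, etc. Start from the componentwise-maximal tuple, for which a direct computation shows $v_{\underline{i}}(\omega_{r})=\sum_{k}\alpha_{i_{k}}$ (with the one-half corrections for spin-type weights when $(n+1)-r$ is odd), and descend by applying products of two simple reflections $s_{1+i_{t}}s_{i_{t}}$ (or $s_{3}s_{1}s_{2}s_{3}$ near the fork), which shift one entry of $\underline{i}$ down by one while preserving non-negativity of $v_{\underline{i}}(\omega_{r})$. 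Uniqueness is immediate: if $u(\omega_{r})=v_{\underline{i}}(\omega_{r})$ then $u^{-1}v_{\underline{i}}$ stabilises $\omega_{r}$, so $u^{-1}v_{\underline{i}}\in W_{I_{r}}$ and $u=v_{\underline{i}}$ in $W^{I_{r}}$.

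For maximality I would argue by contradiction using the cover lemma. If $v_{\underline{i}}$ is not maximal, pick $\beta\in R^{+}$ with $s_{\beta}v_{\underline{i}}\gtrdot v_{\underline{i}}$ and $s_{\beta}v_{\underline{i}}(\omega_{r})\ge 0$. By the cover lemma $\beta$ is one of $\alpha_{i_{t}}$, $\alpha_{i_{t}}\pm\alpha_{i_{t}\pm 1}$, or a root involving the fork; in every case I would compute $s_{\beta}v_{\underline{i}}(\omega_{r})$ explicitly and exhibit the negative simple-root coefficient that forces a contradiction. Completeness (that the list exhausts all maxima) follows from the observation that for any dominant combination $\lambda=\sum c_{k}\alpha_{i_{k}}$ with $c_{k}\in\{1/2,1,3/2\}$ lying in the $W$-orbit of $\omega_{r}$, if two adjacent simple roots $\alpha_{i_{k}},\alpha_{i_{k}+1}$ both appear with positive coefficient then applying $s_{i_{k}}$ strictly lowers $\lambda$ while preserving non-negativity, so any maximum must correspond to an "$\underline{i}$-spacing" of at least $2$.

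The main obstacle is the branching node, which causes two distinct phenomena that do not occur in types $B$ and $C$. First, the tuple $(1,3,i_{1},\ldots,i_{m-2})$ has to be excluded from $J_{m,[1,n]}$ in case (iii), because the relation coming from the extra cover root $\alpha_{1}+\alpha_{3}$ allows $v$ to be pushed further up without losing positivity, merging that hypothetical maximum into the ones arising from $v_{1,\underline{i}}$ and $v_{2,\underline{i}}$. Second, precisely when $\alpha_{1}$ or $\alpha_{2}$ appears in $v(\omega_{r})$ with coefficient $1$ (resp.\ $3/2$), the two reflections $s_{1}$ and $s_{2}$ yield the distinct maxima $v_{1,\underline{i}},v_{2,\underline{i}}$ (resp.\ $v_{\underline{i},1},v_{\underline{i},2}$), and the parity of $m$ switches the sign of the first coordinate as dictated by $s_{1}$ flipping pairs of signs; each of these split cases must be checked separately. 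The bookkeeping for the one-line notation of $v_{\underline{i}}$ in each subcase, and verifying that these permutations indeed lie in $W^{I_{r}}$ (equivalently, that the entries to the right of position $r$ form an increasing sequence or the corresponding signed-permutation minimality condition holds), is routine but must be done carefully.
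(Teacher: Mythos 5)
Your overall strategy coincides with the paper's: its proof of this proposition is explicitly reduced to the type $B$/$C$ template (inductive existence by descending a componentwise order on the index tuples, uniqueness via the stabiliser of $\omega_r$, maximality via a cover lemma, adjacency for completeness), with all the new type-$D$ content concentrated in the cover lemma (Lemma 5.2). The genuine gap is in your formulation of that cover lemma. You assert that every relevant Bruhat cover $s_\beta v$ of $v$ arises from a root with $\mathrm{ht}(\beta)\le 2$. This is false in type $D$: in the spin cases (iv)(b),(c), where $v_{\underline{i},1}(\omega_r)=\tfrac32\alpha_1+\tfrac12\alpha_2+\alpha_3+\sum_k\alpha_{i_k}$, the height-three roots $\alpha_1+\alpha_3+\alpha_4$ and $\alpha_2+\alpha_3+\alpha_4$ also produce covers when $i_1\ge 6$ (the paper checks that $\mathrm{inv}$ increases by $4$ and $\mathrm{neg}$ by $2$, so $\ell_D$ increases by exactly $1$). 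Since maximality is proved by enumerating all covers and showing each violates $s_\beta v(\omega_r)\ge 0$, an incomplete enumeration leaves the argument open. The conclusion happens to survive, because $s_{\alpha_1+\alpha_3+\alpha_4}v_{\underline{i},1}(\omega_r)=\tfrac12\alpha_1+\tfrac12\alpha_2-\alpha_4+\sum_k\alpha_{i_k}$ has a negative $\alpha_4$-coefficient, but your plan as written would never examine this case, and you offer no justification for the height bound (there is none; it must be replaced by the explicit case analysis of Lemma 5.2).

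A second, smaller, problem is in the completeness step. The criterion ``two Dynkin-adjacent simple roots both appearing with positive coefficient forces non-maximality via a simple reflection'' is indeed the paper's argument, but it does not apply to the spin weights: in $\tfrac32\alpha_1+\tfrac12\alpha_2+\alpha_3+\cdots$ the adjacent pairs $(\alpha_1,\alpha_3)$ and $(\alpha_2,\alpha_3)$ both occur with positive coefficients, yet $s_3$ fixes this weight and $s_1,s_2$ send it to something with a negative coefficient, so these elements are maximal after all. Your ``spacing at least $2$'' conclusion therefore cannot be the final form of the completeness statement; the half-integer cases require a separate analysis of which combinations occur in $W\omega_r$ and which reflections raise them. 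The remaining ingredients --- the descent $s_{1+i_t}s_{i_t}$ with a modification at the fork, uniqueness, and the exclusion of $Z$ (a tuple $(1,3,\ldots)$ yields a weight in which the adjacent roots $\alpha_1,\alpha_3$ both appear, so $s_1$ raises the element while keeping the weight non-negative) --- match the paper.
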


\begin{proof}
The proof of the proposition is similar to the proofs for type B and C which uses the following crucial lemma. 
\end{proof}
      
  \begin{lemma} Let $v,  v_{\underline{i}}, v_{\underline{i},1}, v_{\underline{i},2}, v_{1,\underline{i}}, v_{2,\underline{i}}$ are as defined in Proposition 5.1. 
 
 \noindent  (i) Let $r=1$. Then $w > v$ and $l(w) = l(v)+1$ iff $ w=s_kv$ where $k$ takes the following values: $\left\{\begin{array}{lr}
   \
     k  \in \{2, 4, 6, \ldots, n-2, n\},& {n \equiv 0(mod \,\, 4)}\\
     k  \in \{1, 4, 6, \ldots, n-2, n\},& {n \equiv 2(mod \,\, 4)}\\
     k  \in \{2, 5, 7, \ldots, n-2, n\},& {n \equiv 1(mod \,\, 4)}\\
     k  \in \{1, 5, 7, \ldots, n-2, n\},& {n \equiv 3(mod \,\, 4)}.
    \end{array}\right.$
    
    \noindent    (v)  Let $r=2$. Then $w > v$ and $l(w) = l(v)+1$ iff $ w=s_kv$ where $k$ takes the following values: $\left\{\begin{array}{lr}
   \
    k  \in \{1, 4, 6, \ldots, n-2, n\},& {n \equiv 0(mod \,\, 4)}\\
    k  \in \{2, 4, 6, \ldots, n-2, n\},& {n \equiv 2(mod \,\, 4)}\\
    k  \in \{1, 5, 7, \ldots, n-2, n\},& {n \equiv 1(mod \,\, 4)}\\
    k  \in \{2, 5, 7, \ldots, n-2, n\},& {n \equiv 3(mod \,\, 4)}.
    \end{array}\right.$
    
    \noindent (iii) Let $3 \leq r \leq n-1$ and $(n+1)-r = 2m$. Then,
  
\noindent   (a) $w > v_{\underline{i}}$ and $l(w) = l(v_{\underline{i}})+1$ iff $w = s_{\alpha_{i_t}} v_{\underline{i}}$ or $s_{\alpha_{i_t}+\alpha_{i_t+1}} v_{\underline{i}}$ for $i_t :|i_t-i_{t+1}| \geq 3$ or $s_{\alpha_{i_t}+\alpha_{i_t-1}} v_{\underline{i}}$ for $|i_t-i_{t-1}| \geq 3$ or $s_{\alpha_{1}+\alpha_{3}}$ or $s_{{\alpha_2}+{\alpha_3}}$ for $i_1 = 3$.
   
\noindent   (b) $w > v_{1,\underline{i}}$ and $l(w) = l(v_{1,\underline{i}})+1$ iff $w = s_{\alpha_{k}} v_{1,\underline{i}}$ for $k \in \{1, i_t\}$ or $s_{\alpha_{i_t}+\alpha_{i_t+1}} v_{1,\underline{i}}$ for $i_t :|i_t-i_{t+1}| \geq 3$ or $s_{\alpha_{i_t}+\alpha_{i_t-1}} v_{1,\underline{i}}$ for $i_t:|i_t-i_{t-1}| \geq 3$ with $i_1 \geq 5$ or $s_{\alpha_{1}+\alpha_{3}}$ with $i_{1} \geq 5$. 
   
\noindent   (c) $w > v_{2,\underline{i}}$ and $l(w) = l(v_{2,\underline{i}})+1$ iff $w = s_{\alpha_{k}} v_{2,\underline{i}}$ for $k \in \{2, i_t\}$ or $s_{\alpha_{i_t}+\alpha_{i_t+1}} v_{2,\underline{i}}$ for $i_t :|i_t-i_{t+1}| \geq 3$ or $s_{\alpha_{i_t}+\alpha_{i_t-1}} v_{2,\underline{i}}$ for $i_t:|i_t-i_{t-1}| \geq 3$ with $i_1 \geq 5$ or $s_{\alpha_{2}+\alpha_{3}}$ with $i_{1} \geq 5$.
      
  \noindent (iv) Let $3 \leq r \leq n-1$ and $(n+1)-r = 2m+1$. Then, 

\noindent (a) $w > v_{\underline{i}}$ and $l(w) = l(v_{\underline{i}})+1$ iff $w=  s_{\alpha_{k}} v_{\underline{i}}$ for $k \in \{1, 2, i_t\}$ 
      or $s_{\alpha_{i_t}+\alpha_{i_t+1}} v_{\underline{i}}$ for $i_t:|i_t-i_{t+1}| \geq 3$ or $s_{\alpha_{i_t}+\alpha_{i_t-1}} v_{\underline{i}}$ for $i_t:|i_t-i_{t-1}| \geq 3$ with $i_1 \geq 5$. 

\noindent (b) $w > v_{\underline{i},1}$ and $l(w) = l(v_{\underline{i},1})+1$ iff $w = s_{\alpha_{k}} v_{\underline{i},1}$ for $k \in \{1, i_t\}$ or $s_{\alpha_{i_t}+\alpha_{i_t+1}} v_{\underline{i},1}$ for $i_t:|i_t-i_{t+1}| \geq 3$ or $s_{\alpha_{i_t}+\alpha_{i_t-1}} v_{\underline{i},1}$ for $i_t:|i_t-i_{t-1}| \geq 3$ with $i_1 \geq 6$ or $s_{\alpha_1+\alpha_3+\alpha_4} v_{\underline{i},1}$ with $i_1 \geq 6$. 
      
\noindent (c) $w > v_{\underline{i},2}$ and $l(w) = l(v_{\underline{i},2})+1$ iff $w = s_{\alpha_{k}} v_{\underline{i},2}$ for $k \in \{2, i_t\}$ or $s_{\alpha_{i_t}+\alpha_{i_t+1}} v_{\underline{i},2}$ for $i_t:|i_t-i_{t+1}| \geq 3$ or $s_{\alpha_{i_t}+\alpha_{i_t-1}} v_{\underline{i},2}$ for $i_t:|i_t-i_{t-1}| \geq 3$ with $i_1 \geq 6$ or       $s_{\alpha_2+\alpha_3+\alpha_4} v_{\underline{i},2}$ for $i_1 \geq 6$. 
      
      \noindent (v) Let $r=n$. Then $w > v$ and $l(w) = l(v)+1$ iff $w = s_1v$ or $s_2v$. 
    
    \end{lemma}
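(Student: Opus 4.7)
The plan is to follow the strategy of Lemma 4.2 for types $B$ and $C$, with the extra care demanded by the branching of the type $D$ Dynkin diagram at $\alpha_3$. Recall that $w$ covers $v$ in Bruhat order precisely when $w = s_\beta v$ for some positive root $\beta$ with $v^{-1}(\beta) \in R^-$ and $l(s_\beta v) = l(v)+1$. I would organize the proof case by case, matching the five cases of the lemma.

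For the forward direction (each listed $w$ is indeed a cover of $v$), I would verify directly that $s_\beta v > v$ and $l_D(s_\beta v) = l_D(v) + 1$. That $s_\beta v > v$ follows from the explicit change $s_\beta v(\omega_r) = v(\omega_r) - \langle v(\omega_r), \check{\beta} \rangle \beta$, which in each listed case decreases $v(\omega_r)$ strictly in the dominance order. For the length equality, I would use formula (2.2) and compute the changes to $inv$ and $neg$ by tracking how $s_\beta$ permutes the signed-permutation entries of $v$ recorded in Proposition 5.1 -- this is essentially the same bookkeeping as in the proof of Lemma 4.2, case 2, adjusted for the minus sign in $l_D$, with the parity condition of Lemma 2.4(ii) verified whenever the underlying type $B$ Bruhat order already holds.

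For the converse direction (no other $\beta$ produces a cover), I would argue by elimination. Any positive $\beta$ whose support is disjoint from the simple roots appearing in $v(\omega_r)$ either fixes $v(\omega_r)$ and leaves $W^{I_r}$, or satisfies $s_\beta v(\omega_r) \geq v(\omega_r)$, forcing $s_\beta v \leq v$. The adjacency restrictions $|i_t - i_{t \pm 1}| \geq 3$ in the statement arise because, when two indices in $\underline{i}$ are too close, the candidate $s_{\alpha_{i_t} + \alpha_{i_t \pm 1}} v$ either represents the same element in $W/W_{I_r}$ or fails the length-jump condition, and a direct computation using (2.2) rules these out.

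The main obstacle will be the additional subcases in (iii) and (iv) where $v(\omega_r)$ contains $\alpha_1$ or $\alpha_2$ with coefficient $1$ or $3/2$. For $v_{1,\underline{i}}, v_{2,\underline{i}}, v_{\underline{i},1}, v_{\underline{i},2}$ the fork-crossing reflections $s_{\alpha_1 + \alpha_3}, s_{\alpha_2 + \alpha_3}, s_{\alpha_1 + \alpha_3 + \alpha_4}, s_{\alpha_2 + \alpha_3 + \alpha_4}$ must be analyzed as additional cover candidates, and conversely it must be checked that no other root crossing the fork gives a cover. Verifying how the one-line notations change under these reflections -- especially the entries $\pm t$ with $t = \min\{[3,n]\setminus\{[\underline{i}],[\underline{i^{\prime}}]\}\}$ -- and then verifying the type $D$ parity condition of Lemma 2.4(ii) for each such candidate will be the most technical part of the argument.
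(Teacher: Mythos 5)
Your plan follows the paper's own argument for this lemma essentially verbatim: reduce a cover to $w=s_\beta v$ for a positive root $\beta$, get $s_\beta v>v$ from the strict decrease of $v(\omega_r)$ in dominance order, verify the length jump by tracking $inv$ and $neg$ in formula (2.2) on the one-line notations of Proposition 5.1 (the paper does exactly this for $\beta=\alpha_{i_t}+\alpha_{i_t+1}$ and for the fork root $\alpha_1+\alpha_3+\alpha_4$, where $inv$ rises by $4$ and $neg$ by $2$), and eliminate the remaining roots—including $\alpha_1+\alpha_3$, $\alpha_2+\alpha_3$ and their height-three analogues—by showing $s_\beta v(\omega_r)\geq v(\omega_r)$, coincidence in $W^{I_r}$, or a length jump exceeding one. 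The only slip is the sign in your recalled cover criterion: for $s_\beta v>v$ one needs $v^{-1}(\beta)\in R^{+}$, not $R^{-}$, though this does not affect the rest of the plan since you establish $s_\beta v>v$ directly from dominance.
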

     \begin{proof} We prove this lemma only for case (iv) and part (b). Proofs of the other cases are similar to this case.
  
  Let  $w \in W^{I_r}$ such that $w > v_{\underline{i},1}$ and $l(w) = l(v_{\underline{i},1}) +1$. Then $w = s_\beta v_{\underline{i},1}$, with $ht(\beta) \leq 2$ or $\beta$ = $\alpha_1+\alpha_3+\alpha_4$ when $i_1 \geq 6$. Note that $v_{\underline{i},1}(\omega_r)=\frac{3}{2}\alpha_1+\frac{1}{2}\alpha_2+\alpha_3+\displaystyle\sum_{k=1}^{m-1} \alpha_{i_k}$.

  {\bf Case 1.} $\beta = \alpha_k$, a simple root.
  
  If $k=i_t$, then $s_{\alpha_{i_t}}v_{\underline{i},1} (\omega_r)= \frac{3}{2}\alpha_1+\frac{1}{2}\alpha_2+\alpha_3+\sum_{k \neq t}\alpha_{i_k}-\alpha_{i_t}$. Since $s_\beta v_{\underline{i},1}(\omega_r) < v_{\underline{i},1}(\omega_r)$ and $\omega_r$ is a dominant weight we have $s_\beta v_{\underline{i},1} > v_{\underline{i},1}$. Since $\beta$ is a simple root so $l(s_{\beta}v_{\underline{i},1})=l(v_{\underline{i},1})+1$.
  
 If $\beta = \alpha_1$, then $s_1 v_{\underline{i},1} (\omega_r)= -\frac{1}{2}\alpha_1+\frac{1}{2}\alpha_2+\alpha_3+ \sum{\alpha_{i_k}}$. So $s_1 v_{\underline{i},1}(\omega_r) < v_{\underline{i},1}(\omega_r)$. Hence $s_1 v_{\underline{i},1} > v_{\underline{i},1}$ with $l(s_{\beta}v_{\underline{i},1})=l(v_{\underline{i},1})+1$.
 
 If $\beta = \alpha_2$ or $\alpha_3$, then $s_\beta v_{\underline{i},1} = v_{\underline{i},1}$ in $W^{I_r}$, a contradiction. 
   
   If $k \notin \{1, 2, 3, i_t\}$, then  $s_{\beta}v_{\underline{i},1}(\omega_r) > v_{\underline{i},1}(\omega_r)$. Hence $s_\beta v_{\underline{i},1} < v_{\underline{i},1}$, a contradiction.

  {\bf Case 2.} $\beta = \alpha_k+\alpha_{k+1}$, a positive root of height $2$.
  
  If $k=i_t$ with $|i_t-i_{t+1}|\geq 3$, then $s_{\beta} v_{\underline{i},1}(\omega_r) = \frac{3}{2}\alpha_1+\frac{1}{2}\alpha_2+\alpha_3+\displaystyle\sum_{k \neq t} \alpha_{i_k}-\alpha_{i_t+1} <  v_{\underline{i},1}(\omega_r)$. So $s_{\beta} v_{\underline{i},1} > v_{\underline{i},1}$. Now we prove that $l(s_{\beta}v_{\underline{i},1})=l(v_{\underline{i},1})+1$. 
  
  We will prove this case for $m$ is odd. Note that $s_{\beta}v_{\underline{i},1}=(-\underline{i},-3,-2,-1, \hat{\underline{i^{\prime}}},[-n,-5] \\ \backslash\{[-\underline{i}],[-\hat{\underline{i^{\prime}}}]\}\uparrow,-4,4,[5,n]\backslash\{[\underline{i}],[\hat{\underline{i^{\prime}}}]\}\uparrow,
  -\hat{\underline{i^{\prime}}},1,2,3,\underline{i})$ where  $\hat{\underline{i^{\prime}}}=(i_1-1,i_2-1,\ldots,i_{t-1}-1,i_t+1,i_{t+1}-1\ldots,i_m-1)$. In $v_{\underline{i},1}$, the position of $i_t+1$ is right to the position of $i_t-1$ and left to the position of $i_t$ but in $s_{\beta}v_{\underline{i},1}$, the position of $i_t$ remains unchanged and the positions of $i_t+1$ and $i_t-1$ are interchanged. Similarly in $v_{\underline{i},1}$ the position of $-i_t-1$ is right to the position of $-i_t$ and left to the position of $-i_t+1$ but in $s_{\beta}v_{\underline{i},1}$ the position of $-i_t$ remains unchanged and the positions of $-i_t-1$ and $-i_t+1$ are interchanged. So $inv(s_{\beta}v_{\underline{i},1})=inv(v_{\underline{i},1})+2$ and hence $l(s_{\beta}v_{\underline{i},1}) = l(v_{\underline{i},1})+1$.
  
  If $k=i_t$ with $|i_t-i_{t+1}|=2$, then $s_{\beta} v_{\underline{i},1} = v_{\underline{i},1}$ in $W^{I_r}$ since $s_{\beta} v_{\underline{i},1}(\omega_r) = v_{\underline{i},1}(\omega_r)$, a contradiction.
  
  
  
  If $\beta = \alpha_k+\alpha_{k+1}$ and $k \notin \{2, 3, i_t\}$ then since $s_\beta v_{\underline{i},1}(\omega_r) \geq v_{\underline{i},1}(\omega_r)$ we have $s_\beta v_{\underline{i},1} \leq v_{\underline{i},1}$ in $W^{I_r}$, a contradiction.
  
  If $k = 3$ then $\beta = \alpha_3+\alpha_4$. So  $s_{\beta} v_{\underline{i},1}(\omega_r) > v_{\underline{i},1}(\omega_r)$ and hence, $s_{\beta} v_{\underline{i},1} < v_{\underline{i},1}$, a contradiction.
  
   If $k=2$ then $\beta = \alpha_2+\alpha_3$, then $s_{\beta} v_{\underline{i},1}(\omega_r) = v_{\underline{i},1}(\omega_r)$ and hence $s_{\beta} v_{\underline{i},1} = v_{\underline{i},1}$ in $W^{I_r}$, a contradiction.
  
  If $j, j+1 \neq i_t$ then $s_{\beta}v_{\underline{i},1}(\omega_r) \geq v_{\underline{i},1}(\omega_r)$ and $s_{\beta}v_{\underline{i},1} \leq v_{\underline{i},1}$ in $W^{I_r}$, a contradiction.
  
  {\bf Case 3.} If $\beta = \alpha_{k-1}+\alpha_k$, a positive root of height $2$ the proof is similar to above case. 
  
    {\bf Case 4.} If $\beta = \alpha_1+\alpha_3$, then $s_\beta v_{\underline{i},1}=s_3s_{1}v_{\underline{i},1}$. From the reduced expression of $v_{\underline{i},1}$ we see that $l(s_\beta v_{\underline{i},1}) = l(v_{\underline{i},1})+2$, a contradiction.
  
  {\bf Case 5.} If $\beta = \alpha_1+\alpha_3+\alpha_4$ with $i_1 \geq 6$, then $s_\beta v_{\underline{i},1}(\omega_r) = \frac{1}{2}\alpha_1+\frac{1}{2}\alpha_2-\alpha_4+\sum \alpha_{i_k} < v_{\underline{i},1}(\omega_r)$. So $s_{\beta} v_{\underline{i},1} > v_{\underline{i},1}$. We show that $l(s_{\beta}v_{\underline{i},1})=l(v_{\underline{i},1})+1$.
  
We will prove this case for $m$ is odd. Note that $s_{\beta}v_{\underline{i}}=(-\underline{i},-3,-2,4,\underline{i^{\prime}},[-n,-5]\backslash\{[-\underline{i}],[-\underline{i^{\prime}}]\}\uparrow,1,-1,[5,n]\backslash\{[\underline{i}],[\underline{i^{\prime}}]\}\uparrow,-\underline{i^{\prime}},-4,2,3,\hat{\underline{i}})$. So $inv(s_{\beta}v_{\underline{i}})=inv(
 v_{\underline{i}})+4$ and $neg(s_{\beta}v_{\underline{i}})=neg(v_{\underline{i}})+2$. Hence $l(s_{\beta}v_{\underline{i}})=l(v_{\underline{i}})+1$. 
   
   Proof of the converse is clear from the definition of maximal $v$ such that $v(\omega_r) \geq 0$. 
 \end{proof}

   \begin{proposition}
 Let $G$ be of type $D_n$  and let $v,  v_{\underline{i}}, v_{\underline{i},1}, v_{\underline{i},2}, v_{2,\underline{i}}, v_{1,\underline{i}}$ be as defined in Proposition $5.1$. Let $w \in W^{I_r}$ be minimal such that $w(\omega_r) \leq 0$. Then the description of $w$ is the following:      
      
    (i) For $r=1$, $w=\left\{\begin{array}{lr}
    \
    (-n,-(n-2),\ldots,-4,-2,1,3,5,\ldots,n-3,n-1),& n\equiv 0(mod \,\, 4) \\
    (-n,-(n-2),\ldots,-4,-2,-1,3,5,\ldots,n-3,n-1),& n\equiv 2(mod \,\, 4) \\
    (-n,-(n-2),\ldots,-5,-3,-2,-1,4,\ldots,n-3,n-1),& n\equiv 1(mod \,\, 4) \\
    (-n,-(n-2),\ldots,-5,-3,-2,1,4,\ldots,n-3,n-1),& n\equiv 3(mod \,\, 4). 
    \end{array}\right.$

    (ii) For $r=2$, $w=\left\{\begin{array}{lr}
    \
    (n,-(n-2),\ldots,-4,-2,-1,3,5,\ldots,n-3,n-1),& n\equiv 0(mod \,\, 4) \\
    (n,-(n-2),\ldots,-4,-2,1,3,5,\ldots,n-3,n-1),& n\equiv 2(mod \,\, 4) \\
    (n,-(n-2),\ldots,-3,-2,1,4,6,\ldots,n-3,n-1),& n\equiv 1(mod \,\, 4) \\
    (n,-(n-2),\ldots,-3,-2,-1,4,6,\ldots,n-3,n-1),& n\equiv 3(mod \,\, 4). 
    \end{array}\right.$
    
    (iii) Let $3 \leq r \leq n-1$ and $(n+1)-r=2m$. 
    
     If $\underline{i} \in J_{m,[3,n]}$ then $w_{\underline{i}}=s_{i_1}s_{i_2}\ldots s_{i_m}v_{\underline{i}}$ with $w_{\underline{i}}(\omega_r)=-v_{\underline{i}}(\omega_r)$ and
     
     $w_{\underline{i}}=\left\{\begin{array}{lr}
     \
     (-1,[2,n]\backslash\{[\underline{i}],[\underline{i^{\prime}}]\}\uparrow,-\underline{i},\underline{i^\prime}),& \text{m is odd} \\
     (1,[2,n]\backslash\{[\underline{i}],[\underline{i^{\prime}}]\}\uparrow,-\underline{i},\underline{i^{\prime}}),& \text{m is even}. 
          \end{array}\right.$
    
     If $\underline{i} \in J_{m-1,[4,n]}$ then $w_{1,\underline{i}}=s_1s_{i_1}s_{i_2}\ldots s_{i_{m-1}}v_{1,\underline{i}}$ with $w_{1,\underline{i}}(\omega_r)=-v_{1,\underline{i}}(\omega_r)$ and
     
     $w_{1,\underline{i}}=\left\{\begin{array}{lr}
     \
     ([3,n]\backslash\{[\underline{i}],[\underline{i^{\prime}}]\}\uparrow,-\underline{i},
     -2,-1,\underline{i^\prime}),& \text{m is odd} \\
     (-t,[3,n]\backslash\{t,[\underline{i}],[\underline{i^{\prime}}]\}\uparrow,-\underline{i},
     -2,-1, \underline{i^\prime}),& \text{m is even}. 
     \end{array}\right.$ where $t=min{[3,n]\backslash\{[\underline{i}],[\underline{i^{\prime}}]\}}$.  
     
     If $\underline{i} \in J_{m-1,[4,n]}$ then $w_{2,\underline{i}}=s_2s_{i_1}s_{i_2}\ldots s_{i_{m-1}}v_{2,\underline{i}}$ with $w_{2,\underline{i}}(\omega_r)=-v_{2,\underline{i}}(\omega_r)$ and 
     
     $w_{2,\underline{i}}=\left\{\begin{array}{lr}
     \
     (-t,[3,n]\backslash\{t,[\underline{i}],[\underline{i^{\prime}}]\}\uparrow,-\underline{i},
     -2,1,\underline{i^\prime}),& \text{m is odd} \\
     ([3,n]\backslash\{[\underline{i}],[\underline{i^{\prime}}]\}\uparrow,-\underline{i},
     -2,1,\underline{i^\prime}),& \text{m is even}. 
     \end{array}\right.$ where $t=min{[3,n]\backslash\{[\underline{i}],[\underline{i^{\prime}}]\}}$.
       
     (iv) Let $3 \leq r \leq n-1$ and $(n+1)-r=2m+1$.

     For $\underline{i} \in J_{m,[4,n]}$ and $v_{\underline{i}}(\omega_r)=\frac{1}{2}\alpha_1+\frac{1}{2}\alpha_2+\sum_{k=1}^m \alpha_{i_k}$, we have $w_{\underline{i}}=s_1s_2s_{i_1}s_{i_2}\ldots s_{i_{m-1}}v_{\underline{i}}$. In this case $w_{\underline{i}}(\omega_r)=-v_{\underline{i}}(\omega_r)$ and
   
   $w_{\underline{i}}=\left\{\begin{array}{lr}
	\
	(1,[3,n]\backslash\{[\underline{i}],[\underline{i^{\prime}}]\}\uparrow,
	-\underline{i},-2,\underline{i^\prime}),& \text{m is odd} \\
	(-1,[3,n]\backslash\{[\underline{i}],[\underline{i^{\prime}}]\}\uparrow,
	-\underline{i},-2,\underline{i^\prime}),& \text{m is even}. 
\end{array}\right.$

  For $\underline{i} \in J_{m-1,[5,n]}$ and $v_{\underline{i},1}(\omega_r)=\displaystyle \frac{3}{2}\alpha_1+\frac{1}{2}\alpha_2+\alpha_3+\sum_{k=1}^{m-1} \alpha_{i_k}$, we have $w_{\underline{i},2}=s_2s_3s_1s_{i_1}s_{i_2}\ldots s_{i_{m-1}}v_{\underline{i},1}$.
  
  $w_{\underline{i},2}=\left\{\begin{array}{lr}
\
(4,[5,n]\backslash\{[\underline{i}],[\underline{i^{\prime}}]\}\uparrow,-\underline{i},
-3,-2,1,\underline{i^\prime}),& \text{m is odd} \\
(-4,[5,n]\backslash\{[\underline{i}],[\underline{i^{\prime}}]\}\uparrow,-\underline{i},
-3,-2,1,\underline{i^\prime}),& \text{m is even}. 
\end{array}\right.$
     
 For $\underline{i} \in J_{m-1,[5,n]}$ and $v_{\underline{i},2}(\omega_r)=\displaystyle \frac{1}{2}\alpha_1+\frac{3}{2}\alpha_2+\alpha_3+\sum_{k=1}^{m-1} \alpha_{i_k}$, we have $w_{\underline{i},1}=s_1s_3s_2s_{i_1}s_{i_2}\ldots s_{i_{m-1}}v_{\underline{i},2}$. \normalsize In these cases $w_{\underline{i},1}(\omega_r)=-v_{\underline{i},1} (\omega_r)$ and $w_{\underline{i},2}(\omega_r)=-v_{\underline{i},2}(\omega_r)$ and
 
 $w_{\underline{i},1}=\left\{\begin{array}{lr}
 \
 (-4,[5,n]\backslash\{[\underline{i}],[\underline{i^{\prime}}]\}\uparrow,-\underline{i},
 -3,-2,-1,\underline{i^\prime}),& \text{m is odd} \\
 (4,[5,n]\backslash\{[\underline{i}],[\underline{i^{\prime}}]\}\uparrow,-\underline{i},
 -3,-2,-1,\underline{i^\prime}),& \text{m is even}. 
 \end{array}\right.$
  
  (v) For $r=n$, we have $w=s_1s_2v$ and in this case $w(\omega_r)=-v(\omega_r)$. 
   \end{proposition}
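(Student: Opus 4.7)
The strategy parallels the argument already used for Proposition 4.3 in types $B$ and $C$. The minimal $w \in W^{I_r}$ with $w(\omega_r) \le 0$ will be produced from the maximal $v$ of Proposition 5.1 by left-multiplication with a short product of simple reflections whose indices are exactly the positions appearing in the support of $v(\omega_r)$, together with the special reflections $s_1,s_2,s_3$ in the fractional/half-integer cases. The formula is then read off in one-line notation, and minimality is forced by the covering-relations description of Lemma 5.3 read in the upward direction from $w$.

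First I would handle the generic cases (iii) and (iv). For $(n+1)-r=2m$ and $\underline{i}\in J_{m,[3,n]}$ set $w_{\underline{i}} := s_{i_1}s_{i_2}\cdots s_{i_m}\,v_{\underline{i}}$; a direct computation using $s_{i_k}(\alpha_{i_k})=-\alpha_{i_k}$ and $v_{\underline{i}}(\omega_r)=\sum_k\alpha_{i_k}$ gives $w_{\underline{i}}(\omega_r)=-v_{\underline{i}}(\omega_r)$. The conditions $i_{k+1}-i_k\ge 2$ and $\underline{i}\in J_{m,[3,n]}$ ensure that the simple reflections commute pairwise and that each $s_{i_k}$ only swaps the entries $i_k-1$ and $i_k$ in the one-line notation of $v_{\underline{i}}$, which yields the formula stated in the proposition. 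The analogous construction works for $v_{1,\underline{i}},v_{2,\underline{i}},v_{\underline{i}},v_{\underline{i},1},v_{\underline{i},2}$, with the extra factors $s_1$ (respectively $s_2$, or $s_1s_2$, or $s_1s_2s_3$) inserted to flip the coefficient of $\alpha_1$ and/or $\alpha_2$, exactly mirroring the coefficients in $v(\omega_r)$. I would then verify $w\in W^{I_r}$ by checking that $w(\alpha_j)>0$ for every $j\ne r$; this reduces to the assertion that the rightmost entry of $w$ in position $r$ is strictly less than the entry in position $r+1$, which is visible from the one-line notations.

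Next comes minimality. Suppose $u\in W^{I_r}$ satisfies $u<w$ and $u(\omega_r)\le 0$. Choosing a covering $u\lessdot u'\le w$ and then a covering $u'\lessdot w$, write $w=s_\beta u'$ with $\beta$ a positive root of the type listed in Lemma 5.3 (in the relevant sub-case). I would then run through each admissible $\beta$ and show $u'(\omega_r)\not\le 0$: if $\beta=\alpha_{i_t}$, the coefficient of $\alpha_{i_t}$ in $u'(\omega_r)$ becomes $+1$; if $\beta=\alpha_{i_t}\pm\alpha_{i_t\pm 1}$, a neighbouring simple root acquires a positive coefficient; and the special roots $\alpha_1$, $\alpha_2$, $\alpha_1+\alpha_3$, $\alpha_1+\alpha_3+\alpha_4$, $\alpha_2+\alpha_3+\alpha_4$ are handled individually by direct computation of $u'(\omega_r)$, using that $w(\omega_r)=-v(\omega_r)$. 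In each case the computed weight has a strictly positive summand, contradicting $u'(\omega_r)\le 0$.

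Finally I would dispose of cases (i), (ii), (v). Here the weight $v(\omega_r)$ is forced (up to the four residues of $n$ mod $4$ in the cominuscule-like cases $r=1,2$) and there is no family of $\underline{i}$ to range over, so the proof is a direct case-by-case verification: construct $w$ by left-multiplying $v$ by the sequence of $s_k$'s supported on the coefficients of $v(\omega_r)$, verify the one-line notation, and use Lemma 5.3(i), (ii), (v) to check minimality by the same covering-relation argument as above; for $r=n$ the single product $w=s_1s_2v$ already gives $w(\omega_r)=-v(\omega_r)$.

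The principal obstacle is case (iv)(b)--(c), i.e.\ the weights $v_{\underline{i},1}(\omega_r)=\tfrac{3}{2}\alpha_1+\tfrac{1}{2}\alpha_2+\alpha_3+\sum\alpha_{i_k}$ and $v_{\underline{i},2}(\omega_r)=\tfrac{1}{2}\alpha_1+\tfrac{3}{2}\alpha_2+\alpha_3+\sum\alpha_{i_k}$. Here the candidate $w_{\underline{i},2}=s_2s_3s_1\,s_{i_1}\cdots s_{i_{m-1}}v_{\underline{i},1}$ is not simply a product of commuting reflections, so both the length computation (showing that the product is reduced modulo $W_{I_r}$) and the minimality argument must accommodate the height-three reflection $s_{\alpha_1+\alpha_3+\alpha_4}$ (respectively $s_{\alpha_2+\alpha_3+\alpha_4}$) arising in Lemma 5.3(iv). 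For the length I would use the inversion/negation formulas $l_D(\sigma)=(\mathrm{inv}(\sigma)-\mathrm{neg}(\sigma))/2$ from Section 2 to verify $l(w_{\underline{i},2})=l(v_{\underline{i},1})+m+2$; for minimality one exhibits explicitly that applying $s_{\alpha_1+\alpha_3+\alpha_4}$ to $w_{\underline{i},2}$ introduces the summand $+\alpha_4$ into the weight, ruling it out.
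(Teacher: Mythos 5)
Your proposal is self-contained, whereas the paper does not actually prove this proposition: its entire proof is a citation of \cite{KP}, where the minimal $w\in W^{I_r}$ with $w(\omega_r)\le 0$ (equivalently, the minimal Schubert varieties admitting semistable points) are classified. So you are supplying an argument the paper outsources. Your construction of $w$ from the maximal $v$ of Proposition 5.1 is the right one and matches what the paper states without proof: since the support of $v(\omega_r)$ consists of pairwise non-adjacent simple roots away from the fork (plus the special contributions of $\alpha_1,\alpha_2,\alpha_3$), the product of the corresponding reflections negates the weight, and the one-line notation and length count ($l_D=(\mathrm{inv}-\mathrm{neg})/2$) come out as you describe; this is the same bookkeeping the paper carries out explicitly in Lemma 4.2, Lemma 5.2 and Proposition 4.4.

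One step of your minimality argument is misstated, though it is repairable. You propose to enumerate the co-covers $u'\lessdot w$ using ``Lemma 5.3'' (you mean Lemma 5.2), but that lemma classifies the elements \emph{covering} the maximal $v$'s, not the elements \emph{covered by} the $w$'s; these are different sets, and nothing in the paper lists the latter. The clean route is: for dominant $\omega_r$, $u\le u'\le w$ gives $u'(\omega_r)\le u(\omega_r)$, so if some $u<w$ had $u(\omega_r)\le 0$ then some cover $u'\lessdot w$ would too; and for any positive root $\beta$ with $u'=s_\beta w\ne w$ in $W^{I_r}$ one has $u'(\omega_r)=w(\omega_r)+c\beta$ with $c=-\langle w(\omega_r),\check\beta\rangle=\langle v(\omega_r),\check\beta\rangle>0$, so $u'(\omega_r)\le 0$ forces $c\beta\le v(\omega_r)$ coefficientwise. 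A short case check over the possible supports of $\beta$ inside $\mathrm{supp}(v(\omega_r))$ (including $\alpha_1+\alpha_3$, $\alpha_2+\alpha_3$, $\alpha_1+\alpha_3+\alpha_4$, etc., in the half-integer cases) shows this is impossible, which is exactly the conclusion you reach in your third paragraph; you just should not attribute the enumeration to Lemma 5.2. With that correction your outline is a legitimate, self-contained proof of a statement the paper only references.
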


   \begin{proof}
     For the proof of minimality of $w$ refer to \cite{KP}.
   \end{proof}
    
\begin{proposition} Let $v$, $w$, $v_{\underline{i}}$, $w_{\underline{i}}$, $v_{\underline{i},1}$, $w_{\underline{i},1}$, $v_{\underline{i},2}$, $w_{\underline{i},2}$, $v_{1,\underline{i}}$, $w_{1,\underline{i}}$, $v_{2, \underline{i}}$, $w_{2,\underline{i}}$ are as defined in Proposition 5.1 and Proposition 5.3. Let $w \in W^{P_r}$ be minimal and $v \in W^{P_r}$ be maximal such that $w(\omega_r) \leq 0$ and $v(\omega_r) \geq 0$. Then  $X_w^v \neq \emptyset $ iff the pair $(v,w)$ is one of the following:

(i) Let $3 \leq r \leq n-1$. For $(n+1)-r = 2m$:
    
    $(v,w) = \left\{\begin{array}{lr}
   \
    (v_{\underline{i}}, w_{\underline{j}}),& {s.t. |i_k-j_k|\leq 1 \,\, \forall \,\, 1 \leq k \leq m}\\
    (v_{\underline{i}},w_{1,\underline{j}}), (v_{\underline{i}},w_{2,\underline{j}}),& {s.t. |i_{k+1}-j_k|\leq 1 \,\, \forall \,\, 1 \leq k \leq m-1 \,\, with \,\, i_1 =3}\\
    (v_{1,\underline{i}},w_{\underline{j}}), (v_{2,\underline{i}},w_{\underline{j}}),& {s.t. |i_k-j_{k+1}|\leq 1 \,\, \forall \,\, 1 \leq k \leq m-1 \,\, with \,\, j_1 =3}.
  \end{array}\right.$
  
  For $(n+1)-r = 2m+1$:
    
    $(v,w) = \left\{\begin{array}{lr}
   \
    (v_{\underline{i}}, w_{\underline{j}}),& {s.t. |i_k-j_k|\leq 1 \,\, \forall \,\, 1 \leq k \leq m}\\
    (v_{\underline{i}},w_{\underline{j},1}),(v_{\underline{i}},w_{\underline{j},2})& {s.t. |i_{k+1}-j_k|\leq 1 \,\, \forall \,\, 1 \leq k \leq m-1 \,\, with \,\, i_1 =4}\\
    (v_{\underline{i},1},w_{\underline{j}}),(v_{\underline{i},2},w_{\underline{j}})& {s.t. |i_k-j_{k+1}|\leq 1 \,\, \forall \,\, 1 \leq k \leq m-1 \,\, with \,\, j_1 =4}\\
   (v_{\underline{i},1},w_{\underline{j},2}), (v_{\underline{i},2},w_{\underline{j},1})& {s.t. |i_k-j_k|\leq 1 \,\, \forall \,\, 1 \leq k \leq m-1}.
    \end{array}\right.$
   
(ii) For $r = 1, 2$ or $n$, $X_w^v \neq \emptyset $ for any $v$ and $w$.

    \end{proposition}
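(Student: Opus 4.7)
The plan is to mirror the strategy used for type $B$ and $C$ in Proposition $4.4$, but to organize the argument according to the five distinct families of maximal $v$'s and minimal $w$'s classified in Propositions $5.1$ and $5.3$. The key numerical input is that in each case $l(w)-l(v)$ equals the number of simple roots appearing with nonzero coefficient in $v(\omega_r)$ (namely $m$ when $(n+1)-r=2m$, or $m+1$ when $(n+1)-r=2m+1$, adjusted by $1$ or $2$ in the families involving the positions $1,2,3,4$). This follows directly from the explicit formulas $w=\prod_k s_{i_k} v$ given in Proposition $5.3$, since the simple reflections there commute (their indices being pairwise non-adjacent) and each raises the length by exactly one.

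For the forward direction I would assume $X_w^v\neq\emptyset$, so $v\le w$ in the Bruhat order with $l(w)-l(v)$ equal to the number computed above. Iterating Lemma $5.2$ along a saturated chain from $v$ to $w$, every cover step has the form $v'\mapsto s_\beta v'$ where $\beta$ is one of the positive roots enumerated there (a simple $\alpha_{i_t}$, $\alpha_1$ or $\alpha_2$; an $\alpha_{i_t}\pm\alpha_{i_t\pm1}$; or $\alpha_1+\alpha_3+\alpha_4$, $\alpha_2+\alpha_3+\alpha_4$). Each such reflection changes at most one of the indices appearing in $v_{\underline{i}}(\omega_r)$ by at most one, or trades the simple root $\alpha_1$ for $\alpha_2$ (or vice versa) at the boundary; tracking the effect on $v(\omega_r)$ and comparing with $w(\omega_r)=-v_{\underline{j}}(\omega_r)$ forces $|i_k-j_k|\le 1$ coordinate-wise (and the boundary condition $i_1=3$ or $i_1=4$, $j_1=3$ or $j_1=4$ in the mixed pairs).

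For the converse I would construct an explicit chain. Given the pairs $(v,w)$ listed, write $w = \bigl(\prod_{t:j_t=i_t}s_{\alpha_{i_t}}\bigr)\bigl(\prod_{t:j_t=i_t-1}^{\downarrow} s_{\alpha_{i_t}+\alpha_{i_t-1}}\bigr)\bigl(\prod_{t:j_t=i_t+1}^{\uparrow} s_{\alpha_{i_t}+\alpha_{i_t+1}}\bigr)\,v$ (with the evident modifications at the boundary using $s_{\alpha_1}$, $s_{\alpha_2}$, $s_{\alpha_1+\alpha_3+\alpha_4}$, $s_{\alpha_2+\alpha_3+\alpha_4}$ for the families $v_{1,\underline{i}}, v_{2,\underline{i}}, v_{\underline{i},1}, v_{\underline{i},2}$). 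Because $\underline{i},\underline{j}$ satisfy $|i_{k+1}-i_k|\ge 2$ and $|i_k-j_k|\le 1$, the supports of these reflections in the simple-root graph are pairwise disjoint, so the reflections commute and each successive application strictly increases length by $1$ (by the same $\mathrm{inv}$-count argument carried out for a single step in the proof of Lemma $5.2$ case (iv)(b), using $l_D(\sigma)=(\mathrm{inv}(\sigma)-\mathrm{neg}(\sigma))/2$). Hence $v\le w$ and $X_w^v\neq\emptyset$.

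The main obstacle will be the bookkeeping for the mixed families. For pairs such as $(v_{\underline{i}},w_{1,\underline{j}})$, $(v_{1,\underline{i}},w_{\underline{j},2})$ etc., one of the two endpoints uses the exceptional positions $\{1,2,3\}$ (or $\{1,2,3,4\}$) while the other does not, so the boundary reflection connecting the $\alpha_1$-component to the $\alpha_2$-component must be inserted into the chain. This is exactly where the Bruhat-order condition (ii) of Lemma $2.5$ can fail if the parities do not line up; checking that the boundary conditions $i_1=3$, $j_1=3$ (resp.\ $i_1=4$, $j_1=4$) are both necessary and sufficient to make the chain pass through $W^{I_r}$ without parity obstruction is the delicate point. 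For $r=1,2,n$ the parameter $\underline{i}$ disappears (there is only one maximal $v$ and one minimal $w$), and a direct verification using the combinatorial description in Propositions $5.1$(i),(ii),(v) and $5.3$(i),(ii),(v) shows $v\le w$ in all four residue classes modulo $4$, giving case (ii).
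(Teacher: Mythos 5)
Your proposal follows essentially the same route as the paper's proof of Proposition 5.4: covers are classified by Lemma 5.2, the length difference $l(w)-l(v)$ budgets the number of reflections in any factorization $w=\prod s_\beta\, v$, the weight $u(\omega_r)$ is tracked along the chain to force the coordinate-wise conditions $|i_k-j_k|\leq 1$, and the converse is proved by exhibiting $w$ as an explicit product of reflections with pairwise disjoint supports whose successive applications each raise the length by one via the $\mathrm{inv}$-count. The only substantive difference is at the boundary: the paper derives the necessity of $i_1=4$ (resp.\ $i_1=3$) from the length budget alone (if $i_1>4$ the connecting word $s_1s_3s_4s_3s_2(s_5\cdots s_{i_1})(s_4\cdots s_{i_1-1})$ already forces strictly more than $m+2$ reflections), rather than from the parity condition in the type-$D$ Bruhat order criterion, which the paper never invokes in this proof.
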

    
     \begin{proof} Let $X_w^v$ be non-empty. So, $v \leq w$. We prove this part for case (i), $(n+1)-r$ is odd and $(v,w) = (v_{\underline{i}}, w_{\underline{j},1})$. For other cases the proofs are similar. Let $(n+1)-r=2m+1$. We have $v_{\underline{i}}< w_{\underline{j},1}$. Now assume that $|i_{k+1}-j_k| \leq 1, \forall 1 \leq k \leq m-1$. We need to show that $i_1 = 4$. If not, then $i_1>4$. This implies that the coefficient of $\alpha_4$ in $v_{\underline{i}}(\omega_r)$ is zero. Since $l(w_{\underline{j},1})-l(v_{\underline{i}}) = m+2$ and $w_{\underline{j},1} > v_{\underline{i}}$ there exists $m+2$ positive roots $\beta$ such that $w_{\underline{j},1} = \displaystyle \prod_{card(\beta)=m+2}s_{\beta}v_{\underline{i}}$. On the other hand we have $v_{\underline{i}}(\omega_r)=\frac{1}{2}\alpha_1+\frac{1}{2}\alpha_2+0.\alpha_3+0.\alpha_4+\sum_{k=1}^m \alpha_{i_k}$ and $w_{\underline{j},1}(\omega_r)=-\frac{3}{2}\alpha_1-\frac{1}{2}\alpha_2-\alpha_3-\sum_{k=1}^{m-1}\alpha_{j_k}$. So, $w_{\underline{j},1}=[\displaystyle \prod_{t \in \{t:j_{t}=i_{t+1}\}}s_{\alpha_{i_{t+1}}}$ $ \displaystyle \prod_
     {t \in \{t:j_t=i_{t+1}-1\}\downarrow}s_{\alpha_{i_{t+1}}+\alpha_{i_
     {t+1}-1}}$ $ \displaystyle \prod_{t \in \{t:j_t=i_{t+1}+1\}\uparrow}s_{\alpha_{i_{t+1}}+\alpha_
     {i_{t+1}+1}}]s_1s_{3}s_{4}
    s_{3}s_{2}(s_{5}s_{6}\ldots s_{i_1-1}s_{i_1})(s_{4}s_{5}\ldots \\ s_{i_1-2}s_{i_1-1}) v_{\underline{i}}$. Note that the expression in the square bracket contains exactly $m-1$ reflections corresponding to $m-1$ distinct positive roots and it is independent of the word in between this expression and $v_{\underline{i}}$. So we have, $w_{\underline{j},1}=\displaystyle \prod_{card(\beta)>m+2} s_{\beta}v_{\underline{i}}$, a contradiction. So, $i_1 = 4$.


 Let $i_1=4$. We will show that $v_{\underline{i}} < w_{\underline{j},1}$ implies that $|i_{k+1}-j_k| \leq1,\forall 1 \leq k \leq m-1$. Since $l(w_{\underline{j},1})-l(v_{\underline{i}})=m+2$, by Lemma 5.2 we have $w_{\underline{j},1} = \displaystyle (\prod_{ \mathbf{card}\{\beta\} = m-1}s_\beta)s_1s_3s_4s_3s_2v_{\underline{i}}$ for positive roots $\beta$ such that $\beta$ is either $\alpha_{i_{t}}$ or $\alpha_{i_t}+\alpha_{{i_t}+1}$ or $\alpha_{i_t}+\alpha_{{i_t}-1}$ for some $i_t$, $2 \leq t \leq m$. So $w_{\underline{j},1}(\omega_r) = -\frac{3}{2}\alpha_1-\frac{1}{2}\alpha_2-\alpha_3\displaystyle -\sum_{{j_k}:|i_{k+1}-j_k| \leq 1} \alpha_{j_k}$. Hence, $|i_{k+1}-j_k| \leq 1$ $\forall 1 \leq k \leq m-1$.

     Conversely, let $v=v_{\underline{i}}, w=w_{\underline{j},1}$ with $|i_{k+1}-j_k| \leq 1 \,\, \forall \,\, 1 \leq k \leq m-1$ and $i_1 = 4$. Then we need to show that $v_{\underline{i}} < w_{\underline{j},1}$. Note that in this case $w_{\underline{j},1}=\displaystyle \prod_{ \{t:j_t=i_{t+1}\}}s_{\alpha_{i_{t+1}}}$ $ \displaystyle \prod_{ \{t:j_t=i_{t+1}-1\}\downarrow}s_{\alpha_{i_{t+1}}+\alpha_{i_{t+1}-1}}$ $ \displaystyle \prod_{ \{t:j_t=i_{t+1}+1\}\uparrow}s_{\alpha_{i_{t+1}}+\alpha_{i_{t+1}+1}}$ $(s_{\alpha_1}) (s_{\alpha_4+\alpha_3})(s_{\alpha_2})$ $v_{\underline{i}}$. We claim that in this product multiplication of each reflection to $v_{\underline{i}}$ amounts to increase the length by one and the product is greater than $v_{\underline{i}}$. Since $\underline{i} \in J_{m,[4,n]}$, $\underline{j} \in J_{m-1,[5,n]}$ and $|i_{k+1}-j_k|\leq 1 \forall 1 \leq k\leq m-1$ with $i_1=4$ we observe that the sets  $\{\bigcup\limits_{\{t:j_t=i_{t+1}+1\}}\{i_{t+1}-1,i_{t+1},
     i_{t+1}+1\}\}$, $\{\bigcup\limits_{\{t:j_t=i_{t+1}-1\}}\{i_{t+1}-2,i_{t+1}-1,
     i_{t+1}\}\}$ and  $\{\bigcup\limits_{\{t:j_t=i_{t+1}\}}\{i_{t+1}-1,i_{t+1}\}\}$ are mutually disjoint.
    
    It is easy to see that $s_{\alpha_1}s_{\alpha_3+\alpha_4}s_{\alpha_2}v_{\underline{i}} > v_{\underline{i}}$ and  $l(s_{\alpha_1}s_{\alpha_3+\alpha_4}s_{\alpha_2}v_{\underline{i}})=l( v_{\underline{i}})+3$.

     Now we claim that $s_{\alpha_1}
    s_{\alpha_3+\alpha_4}s_{\alpha_2}v_{\underline{i}}< \displaystyle \prod_{ \{t:j_t=i_{t+1}+1\}\uparrow}s_{\alpha_{i_{t+1}}+
    \alpha_{i_{t+1}+1}}s_{\alpha_1}
    s_{\alpha_3+\alpha_4}s_{\alpha_2}v_{\underline{i}}$ and the length is increased by the number of reflections multiplied. Let $t$ be maximal such that $j_t=i_{t+1}+1$. Since the sets $\{1,2,3,4\}$ and  $\{\cup_{\{t:j_t=i_{t+1}+1\}}\{i_{t+1}-1,i_{t+1},i_{t+1}+1\}\}$ are mutually disjoint, from Lemma $5.2$ we see that $s_{\alpha_1}s_{\alpha_3+\alpha_4}s_{\alpha_2}
    v_{\underline{i}}<s_{\alpha_{i_{t+1}+i_{t+1}+1}}s_{\alpha_1}
    s_{\alpha_3+\alpha_4}s_{\alpha_2}v_{\underline{i}}$. 
    
     Let $t$ be such that $i_{t}=i_{t+1}-2$ and $j_{t-1}=i_{t}+1$. We have $s_{\alpha_{i_t}+\alpha_{i_t+1}}s_{\alpha_{i_{t+1}}+\alpha_{i_{t+1}+1}} s_{\alpha_1} s_{\alpha_3+\alpha_4}\newline s_{\alpha_2}
  v_{\underline{i}}(\omega_r)=-\frac{3}{2}\alpha_1-\frac{1}{2}\alpha_2-\alpha_3+\displaystyle\sum_{k=2,\neq \{t, t+1\}}^m\alpha_{i_k}-\alpha_{i_{t+1}+1}-\alpha_{i_t+1}$. So $s_{\alpha_{i_t}+\alpha_{i_t+1}}s_{\alpha_{i_{t+1}}+\alpha_{i_{t+1}+1}} s_{\alpha_1}\newline s_{\alpha_3+\alpha_4}s_{\alpha_2}
  v_{\underline{i}}(\omega_r)<s_{\alpha_{i_{t+1}}+\alpha_{i_{t+1}+1}} s_{\alpha_1} s_{\alpha_3+\alpha_4}s_{\alpha_2}
  v_{\underline{i}}(\omega_r)$. Hence $s_{\alpha_{i_t}+\alpha_{i_t+1}}s_{\alpha_{i_{t+1}}+\alpha_{i_{t+1}+1}} s_{\alpha_1} s_{\alpha_3+\alpha_4} \\ s_{\alpha_2}
  v_{\underline{i}}>s_{\alpha_{i_{t+1}}+\alpha_{i_{t+1}+1}} s_{\alpha_1} s_{\alpha_3+\alpha_4}s_{\alpha_2}
  v_{\underline{i}}$. From the one line notations of these two elements we see that the length is increasing by $1$.
     
  
  Repeating this process we conclude that $v_{\underline{i}}<\displaystyle\prod_{t\in\{t:j_t=i_{t+1}\}}
  s_{\alpha_{i_{t+1}}}\prod_{t\in\{t:j_t=i_{t+1}-1\}
  \downarrow}s_{\alpha_{i_{t+1}}+\alpha_{i_{t+1}-1}}\newline
  \prod_{t\in\{t:j_t=i_{t+1}+1\}\uparrow}s_
       {\alpha_{i_{t+1}}+\alpha_{i_{t+1}+1}} s_{\alpha_1} s_{\alpha_3+\alpha_4}s_{\alpha_2}v_{\underline{i}}$.
        \end{proof}

 \begin{theorem} 
 Let $G$ be of type $D_n$ and let $P_r$ be the maximal parabolic subgroup corresponding to the simple root $\alpha_r$. Let $\mathcal{L}_r$ be the line bundle corresponding to the fundamental weight $\omega_r$.     
  Then $(X^{v}_{w})^{ss}_T(\mathcal{L}_r)$ is non-empty if and only if the pair $(v,w)$ is one of the following:  
  
  (i) For $3 \leq r \leq n-1$,\\  $(n+1)-r=2m$: \,\,  $(v,w) \,\, s.t.\left\{\begin{array}{lr}
   v \leq v_{\underline{i}} \,\, and \,\, w \geq w_{\underline{i}}\\
   v \leq v_{1,\underline{i}} \,\, and \,\, w \geq w_{1,\underline{i}}\\
   v \leq v_{2,\underline{i}} \,\, and \,\, w \geq w_{2,\underline{i}}.
   \end{array}\right.$
  
  $(n+1)-r=2m+1$: \,\,  $(v,w) \,\, s.t. \,\, \left\{\begin{array}{lr}
    v \leq v_{\underline{i}} \,\, and \,\, w \geq w_{\underline{i}}\\
     v \leq v_{\underline{i},1} \,\, and \,\, w \geq w_{\underline{i},2}\\
    v \leq v_{\underline{i},2} \,\, and \,\, w \geq w_{\underline{i},1}.
    \end{array}\right.$ 

(ii) For $r=1, 2$ and $n$, $(X^{v}_{w})^{ss}_T(\mathcal{L}_r)$ is non-empty for any $v$ and $w$.
    
    where $v$, $w$, $v_{\underline i}, w_{\underline i}, v_{\underline{i},1},  v_{\underline{i},2}, w_{\underline{i},1}$, $w_{\underline{i},2}$, $v_{1,\underline{i}}$, $w_{1,\underline{i}}$,$v_{2,\underline{i}}$ and $w_{2,\underline{i}}$ are as in Proposition 5.3.
    
    \end{theorem}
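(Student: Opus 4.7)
The plan is to mirror the proof of Theorem~4.5, with extra combinatorial bookkeeping for the multiple extremal-pair types arising in type $D$. The key tool is Theorem~2.2, which gives a standard-monomial basis of $H^{0}(X^{v}_{w},\mathcal{L}_{r}^{N})$ indexed by chains $v \le \tau_{1} \le \cdots \le \tau_{N} \le w$ in $W^{I_{r}}$; each monomial is a $T$-weight vector of weight $-\sum_{\ell}\tau_{\ell}(\omega_{r})$, so $(X^{v}_{w})^{ss}_{T}(\mathcal{L}_{r})\neq\emptyset$ exactly when some such chain has weights summing to zero.

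For the sufficient direction I exhibit a zero-weight standard-monomial chain in $[v,w]$ for each listed pair. When the paired extremal elements satisfy $v_{\ast}(\omega_{r})+w_{\ast}(\omega_{r})=0$---which by Proposition~5.3 covers the pairs $(v_{\underline{i}},w_{\underline{i}})$ in both parities and $(v_{1,\underline{i}},w_{1,\underline{i}})$, $(v_{2,\underline{i}},w_{2,\underline{i}})$ in the even parity---the length-$2$ monomial $p_{v_{\ast}}p_{w_{\ast}}$ is already $T$-invariant and non-vanishing on $X^{v_{\ast}}_{w_{\ast}}$ by Theorem~2.2. Combining this with the containment $X^{v_{\ast}}_{w_{\ast}}\subseteq X^{v}_{w}$ arising from $v\le v_{\ast}$ and $w\ge w_{\ast}$ produces the required invariant section on $X^{v}_{w}$. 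For the crossed odd-parity pairs $(v_{\underline{i},1},w_{\underline{i},2})$ and $(v_{\underline{i},2},w_{\underline{i},1})$, a direct calculation gives $v_{\underline{i},1}(\omega_{r})+w_{\underline{i},2}(\omega_{r})=\alpha_{1}-\alpha_{2}\ne 0$, so the pure product fails; I plan to construct a degree-$4$ invariant standard monomial by selecting two intermediate elements on the reduced-expression path $s_{2}s_{3}s_{1}s_{i_{1}}\cdots s_{i_{m-1}}v_{\underline{i},1}$ whose combined weights cancel the residual $\alpha_{1}-\alpha_{2}$.

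For the necessary direction, assume $(X^{v}_{w})^{ss}_{T}(\mathcal{L}_{r})\neq\emptyset$. Proposition~3.1 forces $v(\omega_{r})\ge 0$ and $w(\omega_{r})\le 0$, and by Propositions~5.1 and~5.3 there is a maximal $v'\ge v$ with $v'(\omega_{r})\ge 0$ and a minimal $w'\le w$ with $w'(\omega_{r})\le 0$ drawn from the explicit classification. Fixing a witness $T$-invariant chain $v\le\tau_{1}\le\cdots\le\tau_{N}\le w$ and using the monotonicity $v(\omega_{r})\ge\tau_{\ell}(\omega_{r})\ge w(\omega_{r})$, I match $(v',w')$ against Proposition~5.4 and rule out the combinations not in the theorem's list: the pairs with empty Richardson are excluded by Proposition~5.4 itself, while the off-diagonal non-matching-index pairs are ruled out by tracking the coefficients of $\alpha_{i_{t}}$ and $\alpha_{i_{t}\pm 1}$ in $\tau(\omega_{r})$ along any hypothetical zero-weight chain, following the final paragraph of the proof of Theorem~4.5.

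The main obstacle is the sufficient direction in the crossed odd-parity case. Unlike in types $B$ and $C$ (and the diagonal pairs here), the minimal Richardson $X^{v_{\underline{i},1}}_{w_{\underline{i},2}}$ admits no $T$-invariant section of degree $2$, so a genuinely higher-degree construction is required. Identifying explicit intermediates in the Bruhat interval $[v_{\underline{i},1},w_{\underline{i},2}]$ that are mutually comparable and produce the correct weight combination needs careful use of Lemma~5.2 combined with the one-line formulas of Propositions~5.1 and~5.3; this reflects the $\mathbb{Z}/2$ diagram symmetry of $D_{n}$ exchanging $\alpha_{1}$ and $\alpha_{2}$, which has no analogue in $B_{n}$ or $C_{n}$ and is responsible for the doubling of extremal types.
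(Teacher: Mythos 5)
Your proposal follows essentially the same route as the paper: reduce to maximal $v$ and minimal $w$, use the standard monomial basis of Theorem~2.2 to translate semistability into the existence of a zero-weight chain, take the degree-$2$ monomial $p_{v_{\ast}}p_{w_{\ast}}$ for the diagonal pairs, and rule out the remaining pairs by tracking coefficients of $\alpha_{i_t}$ and $\alpha_{i_t\pm1}$ as in Theorem~4.5. The one step you flag as the main obstacle is resolved in the paper exactly as you anticipate, by the explicit degree-$4$ chain $v_{\underline{i},1}\leq s_1v_{\underline{i},1}\leq s_{\alpha_{i_1}}\cdots s_{\alpha_{i_{m-1}}}s_3s_1v_{\underline{i},1}\leq w_{\underline{i},2}$ (and its mirror under $1\leftrightarrow 2$), whose four weights $\bigl(\tfrac32,\tfrac12\bigr)$, $\bigl(-\tfrac12,\tfrac12\bigr)$, $\bigl(-\tfrac12,\tfrac12\bigr)$, $\bigl(-\tfrac12,-\tfrac32\bigr)$ in the $(\alpha_1,\alpha_2)$-coordinates sum to zero and cancel the residual $\alpha_1-\alpha_2$ you computed.
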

    
    \begin{proof} Now since $X_w^v \subseteq X_{w'}^{v'}$ implies  $(X^{v}_{w})^{ss}_T(\mathcal{L}_r) \subseteq (X^{v'}_{w'})^{ss}_T(\mathcal{L}_r)$, we can assume that $v$ is maximal and $w$ is minimal having the property that $v(\omega_r) \geq 0$ and $w(\omega_r) \leq 0$. We prove the theorem for case (i) and $(n+1)-r$ is odd. For other cases the proof is similar.
    
    Let $(n+1)-r=2m+1$. For each pair $(v,w)$, we construct a non-zero $T$-invariant section of $\mathcal{L}_r$ on $G/P_r$ which is not identically zero on  $X_w^v$.
    
     For $(v,w) = (v_{\underline{i}}, w_{\underline{i}})$ we have $v_{\underline{i}}(\omega_r)+w_{\underline{i}}(\omega_r) = 0$. So $p_{v_{\underline{i}}}p_{w_{\underline{i}}}$ is a non-zero $T$-invariant section of $\mathcal{L}_r$ on $G/P_r$ which is not identically zero on  $X_w^v$. 
     
     For $(v,w) = (v_{\underline{i},1},w_{\underline{i},2})$, we consider the sequence $v_{\underline{i},1} \leq s_1 v_{\underline{i},1} \leq s_{\alpha_{i_1}}s_{\alpha_{i_2}}\ldots s_{\alpha_{i_{m-1}}}s_3s_1v_{\underline{i},1} \\ \leq s_2s_{\alpha_{i_{1}}}s_{\alpha_{i_2}}\ldots s_{\alpha_{i_{m-1}}}s_3s_1v_{\underline{i},1} = w_{\underline{i},2}$. We have $v_{\underline{i},1}(\omega_r)+s_1 v_{\underline{i},1}(\omega_r)+s_{\alpha_{i_1}}s_{\alpha_{i_2}}\ldots s_{\alpha_{i_{m-1}}} s_3s_1 \\ v_{\underline{i},1}(\omega_r)+w_{\underline{i},2}(\omega_r) = 0$ and so $p_{v_{\underline{i},1}}p_{s_1 v_{\underline{i},1}}p_{s_{\alpha_{i_1}}s_{\alpha_{i_2}}\ldots s_{\alpha_{i_{m-1}}}s_3s_1v_{\underline{i},1}}p_{w_{\underline{i},2}}$ is a non-zero $T$-invariant section of $\mathcal{L}_r$ on $G/P_r$ which is not identically zero on  $X_w^v$.

     For $(v,w) = (v_{\underline{i},2},w_{\underline{i},1})$,  we consider the sequence $v_{\underline{i},2} \leq s_2 v_{\underline{i},2} \leq s_{\alpha_{i_1}}s_{\alpha_{i_2}}\ldots s_{\alpha_{i_{m-1}}}s_3 \\ s_2v_{\underline{i},2} \leq s_1s_{\alpha_{i_{1}}}s_{\alpha_{i_2}}\ldots s_{\alpha_{i_{m-1}}}s_3s_2v_{\underline{i},2} = w_{\underline{i},1}$. We have $v_{\underline{i},2}(\omega_r)+s_2 v_{\underline{i},2}(\omega_r)+s_{\alpha_{i_1}}s_{\alpha_{i_2}}\ldots s_{\alpha_{i_{m-1}}}s_3 
     \\ s_2v_{\underline{i},2}(\omega_r)+w_{\underline{i},1}(\omega_r) = 0$ and so $p_{v_{\underline{i},2}}p_{s_2 v_{\underline{i},2}}p_{s_{\alpha_{i_1}}s_{\alpha_{i_2}}\ldots s_{\alpha_{i_{m-1}}}s_3s_2v_{\underline{i},2}}p_{w_{\underline{i},1}}$  is a non-zero $T$-invariant section of $\mathcal{L}_r$ on $G/P_r$ which does not vanish identically zero on  $X_w^v$. 
  

   So, in all these cases we conclude that $(X{^{v}_{w}})^{ss}_T(\mathcal{L}_r) \neq \emptyset$.     
    
    Conversely, let $(X^v_w)^{ss}_T(\mathcal{L}_r)$ be non-empty. 
    
     Let $(v,w) = (v_{\underline{i},1},w_{\underline{j},2})$.   If $\underline{i} \neq \underline{j}$, then there exists $t$ such that $j_t \neq i_t$. Since $X^{v_{\underline{i},1}}_{w_{\underline{j},2}} \neq \emptyset$, by proposition 5.4 we have either $j_t = i_t+1$ or $j_t = i_t-1$. If $j_t=i_t+1$ then $w_{\underline{j},2}(\omega_r)$ = $-\frac{1}{2}\alpha_1-\frac{3}{2}\alpha_2-\alpha_3-\sum_{k=1,\neq t}^{m-1} {\alpha_{i_k}}-\alpha_{i_t+1}$ and if $j_t=i_t-1$ then $w_{\underline{j},2}(\omega_r)$ = $-\frac{1}{2}\alpha_1-\frac{3}{2}\alpha_2-\alpha_3-\sum_{k=1,\neq t}^{m-1} {\alpha_{i_k}}-\alpha_{i_t-1}$. Let $u \in W^{I_r}$ be such that $v_{\underline{i},1} \leq u \leq w_{\underline{j},2}$. Then $u$ is of the form $u = (\displaystyle \prod_{\beta} s_\beta) v_{\underline{i},1}$, where $\beta$'s are some positive roots. For $j_t=i_t+1$ at most one $\beta$ can be $\alpha_{i_t}+\alpha_{i_t+1}$ and none of the other $\beta$'s contain $\alpha_{i_t}$ or $\alpha_{i_t+1}$ as a summand. So in $u(\omega_r)$ the coefficient of $\alpha_{i_t}$ is either zero or one and the coefficient of $\alpha_{i_t+1}$ is either zero or $-1$. Similarly for $j_t=i_t-1$ at most one $\beta$ can be $\alpha_{i_t}+\alpha_{i_t-1}$ and none of the other $\beta$'s contain $\alpha_{i_t}$ or $\alpha_{i_t-1}$ as a summand. So in $u(\omega_r)$ the coefficient of $\alpha_{i_t}$ is either zero or one and the coefficient of  $\alpha_{i_t-1}$ is either zero or $-1$. For $j_t=i_t+1$, $u(\omega_r)$ contains either $\alpha_{i_t}$ or $\alpha_{i_t+1}$ as a summand and for $j_t=i_t-1$, $u(\omega_r)$ contains either $\alpha_{i_t}$ or $\alpha_{i_t-1}$ as a summand. So, there does not exist any sequence $v_{\underline{i},1} = u_1 \leq u_2 \leq \ldots \leq u_k = w_{\underline{j},2}$ such that $\sum_{l=1}^k u_l(\omega_r)=0$. So we don't have a nonzero $T$-invariant section which is not identically zero on $X_w^v$.

%

     Let $(v,w) = (v_{\underline{i}},w_{\underline{j},1})$ where $\underline{i}=(4,i_2,\ldots,i_m)$ and $\underline{j}=(i_2,\ldots,i_m)$. Then $v_{\underline{i}}(\omega_r)$ = $\frac{1}{2}\alpha_1+\frac{1}{2}\alpha_2+\alpha_4+\sum_{k=2}^{m} {\alpha_{i_k}}$ and $w_{\underline{j},1}(\omega_r)$ = $-\frac{3}{2}\alpha_1-\frac{1}{2}\alpha_2-\alpha_3-\sum_{k=2}^{m} {\alpha_{i_k}}$. Then any $u \in W^{I_r}$ such that $v_{\underline{i}} \leq u \leq w_{\underline{j},1}$ is of the form $u = (\displaystyle \prod_{\beta} s_\beta) v_{\underline{i}}$, where $\beta$'s are some positive roots. At most one $\beta$ can be $\alpha_3+\alpha_4$ and none of the other $\beta$'s can contain $\alpha_3$ or $\alpha_4$ as a summand. So, the coefficient of $\alpha_4$ in $u(\omega_r)$ is either zero or one and the coefficient of $\alpha_3$ in $u(\omega_r)$ is either zero or $-1$. So for any such $u$, $u(\omega_r)$ contains either $\alpha_3$ or $\alpha_4$ as a summand. So, in this case also there is no non zero $T$-invariant section which is not identically zero on $X_w^v$.

      Let $(v,w) = (v_{\underline{i}},w_{\underline{j},1})$ with $i_1 = 4$. If $(i_2,i_3, \ldots, i_m) \neq \underline{j}$, then there exists $t$ such that $j_t \neq i_{t+1}$. Since $X^{v_{\underline{i}}}_{w_{\underline{j},1}} \neq \emptyset$, by proposition 5.4 we have $j_t = i_{t+1}+1$ or $j_t = i_{t+1}-1$. If $j_t = i_{t+1}+1$ then $w_{\underline{j},1}(\omega_r)$ = $-\frac{3}{2}\alpha_1-\frac{1}{2}\alpha_2-\alpha_3-\sum_{k=1,\neq t}^{m-1} {\alpha_{i_{k+1}}}-\alpha_{i_{t+1}+1}$ and if $j_t=i_{t+1}-1$ then $w_{\underline{j},1}(\omega_r)$ = $-\frac{3}{2}\alpha_1-\frac{1}{2}\alpha_2-\alpha_3-\sum_{k=1,\neq t}^{m-1} {\alpha_{i_{k+1}}}-\alpha_{i_{t+1}-1}$. Then any $u \in W^{I_r}$ such that $v_{\underline{i}} \leq u \leq w_{\underline{j},1}$ is of the form $u = (\displaystyle \prod_{\beta} s_\beta) v_{\underline{i}}$, where $\beta$'s are some positive roots. For $j_t=i_{t+1}+1$ at most one $\beta$ can be $\alpha_{i_{t+1}}+\alpha_{i_{t+1}+1}$ and none of the other $\beta$'s contain $\alpha_{i_{t+1}}$ or $\alpha_{i_{t+1}+1}$ as a summand. So, in $u(\omega_r)$ the coefficient of $\alpha_{i_{t+1}}$ is either zero or one and the coefficient of $\alpha_{i_{t+1}+1}$ is either zero or $-1$. Similarly for $j_t=i_{t+1}-1$ at most one $\beta$ can be $\alpha_{i_{t+1}}+\alpha_{i_{t+1}-1}$ and none of the other $\beta$'s contain $\alpha_{i_{t+1}}$ or $\alpha_{i_{t+1}-1}$ as a summand. So, in $u(\omega_r)$ the coefficient of $\alpha_{i_{t+1}}$ is either zero or one and the coefficient of $\alpha_{i_{t+1}-1}$ is either zero or $-1$. For $j_t=i_{t+1}+1$, $u(\omega_r)$ contains either $\alpha_{i_{t+1}}$ or $\alpha_{i_{t+1}+1}$ as a summand and for $j_t=i_{t+1}-1$, $u(\omega_r)$ contains either $\alpha_{i_{t+1}}$ or $\alpha_{i_{t+1}-1}$ as a summand. So, like in previous cases here also we don't have a non zero $T$-invariant section which is not identically zero on $X_w^v$.  

For the pair $(v,w) = (v_{\underline{i}},w_{\underline{j}})$ with $\underline{i} \neq \underline{j}$ the proof is similar as in the cases in type $B$ and $C$. 
        \end{proof}
        
   We illustrate  Proposition 5.4 and Theorem 5.5 with an example.    

  {\bf Example:}  $D_5$, $\omega_3 = (\frac{3}{2},\frac{3}{2},3,2,1)$
 
  \hspace{2cm} $v_{\underline{i}}$ \hspace{2cm} $v_{\underline{i}}(\omega_3)$ \hspace{2cm} $w_{\underline{i}}(\omega_3)$ \hspace{2.5cm} $w_{\underline{i}}$
 
 $v_{(4)}$ = $(-1,5,-3,2,4)$ \hspace{.5cm} $(\frac{1}{2},\frac{1}{2},0,1,0)$ \hspace{.5cm} $(-\frac{1}{2},-\frac{1}{2},0,-1,0)$ \hspace{.5cm} $(1,5,-4,-2,3) = w_{(4)}$

$v_{(5)}$ = $(-1,3,-4,2,5)$ \hspace{.5cm} $(\frac{1}{2},\frac{1}{2},0,0,1)$ \hspace{.5cm} $(-\frac{1}{2},-\frac{1}{2},0,0,-1)$ \hspace{.5cm} $(1,3,-5,-2,4) = w_{(5)}$

$v_1$ = $(4,5,1,2,3)$ \hspace{1.5cm} $(\frac{3}{2},\frac{1}{2},1,0,0)$ \hspace{.5cm} $(-\frac{1}{2},-\frac{3}{2},-1,0,0)$ \hspace{.5cm} $(4,5,-3,-2,1) = w_2$

$v_2$ = $(-4,5,-1,2,3)$ \hspace{.75cm} $(\frac{1}{2},\frac{3}{2},1,0,0)$ \hspace{.5cm} $(-\frac{3}{2},-\frac{1}{2},-1,0,0)$ \hspace{.4cm} $(-4,5,-3,-2,-1) = w_1$

%
 So from the above observation, $X_{w_{(4)}}^{v_{(4)}}$, $X_{w_{(5)}}^{v_{(4)}}$, $X_{w_{(4)}}^{v_{(5)}}$, $X_{w_{(5)}}^{v_{(5)}}$, $X_{w_{1}}^{v_{(4)}}$, $X_{w_{2}}^{v_{(4)}}$, $X_{w_{(4)}}^{v_{1}}$, $X_{w_{(4)}}^{v_{2}}$, $X_{w_{2}}^{v_{1}}$ and $X_{w_{1}}^{v_{2}}$ are all non-empty. We have $(X_{w_{(4)}}^{v_{(5)}})^{ss}_T(\mathcal{L}_3)$, $(X_{w_{(5)}}^{v_{(4)}})^{ss}_T(\mathcal{L}_3)$, $(X_{w_{1}}^{v_{(4)}})^{ss}_T(\mathcal{L}_3)$, $(X_{w_{(4)}}^{v_{1}})^{ss}_T(\mathcal{L}_3)$, $(X_{w_{(4)}}^{v_{2}})^{ss}_T(\mathcal{L}_3)$, and $(X_{w_{2}}^{v_{(4)}})^{ss}_T(\mathcal{L}_3)$ are empty whereas $(X_{w_{(4)}}^{v_{(4)}})^{ss}_T(\mathcal{L}_3)$, $(X_{w_{(5)}}^{v_{(5)}})^{ss}_T(\mathcal{L}_3)$, $(X_{w_{1}}^{v_{2}})^{ss}_T(\mathcal{L}_3)$ and $(X_{w_{2}}^{v_{1}})^{ss}_T(\mathcal{L}_3)$ are non-empty.


\begin{thebibliography}{222}

\bibitem{BB} A. BJ$\ddot{\mbox{o}}$rner, F. Brenti, (2005) Combinatorics of Coxeter Groups, Graduate Texts in Mathematics, Springer-Verlag, New York.

\bibitem{BL}
M. Brion, V. Lakshmibai, A geometric approach to standard monomial theory, Represent. Theory 7 (2003), 651-680 (electronic).

\bibitem{carter}
 R. W. Carter(1985), Finite groups of Lie type, conjugacy classes and complex characters, John Wiley and sons.


\bibitem{D}
 V. Deodhar, On some geometric aspects of Bruhat orderings. I. A finer decomposition of Bruhat cells, Invent. Math. 79 (1985), no. 3, 499-511.




\bibitem{hum1} 
J. E. Humphreys(1972). Introduction to Lie algebras and representation theory, Springer, Berlin Heidelberg.

\bibitem{hum2}
 J. E. Humphreys(1975), Linear Algebraic Groups. Springer, Berlin.

\bibitem{HK} 
Hausmann, C and Knutson, A. Polygon spaces and grassmannians. L'Enseignement Mathematique 43 (1997), no. 1-2, 173-198.

\bibitem{kannan1}
 S. S. Kannan, Torus quotients of homogeneous spaces, Proc. Indian Acad. Sci. (Math.Sci.) 108(1) (1998) 1--12.

\bibitem{kannan2}
 S. S. Kannan, Torus quotients of homogeneous spaces-II, Proc. Indian Acad. Sci. (Math.Sci.) 109(1) (1999) 23-39.

\bibitem{KPPU}
S. S. Kannan,  S. K. Pattanayak, K. Paramasamy, S. Upadhyay, Torus quotients of Richardson varieties, Comm.in algebra 119(4) (2009) 469-485

\bibitem{KS}
S. S. Kannan,  Pranab Sardar, Torus quotients of homogeneous spaces of the general linear group and the standard representation of certain symmetric groups, Proc. Indian Acad. Sci. (Math.Sci.) 119(1) (2009) 81--100.

\bibitem{KP}
S. S. Kannan,  S. K. Pattanayak, Torus quotients of homogeneous spaces-minimal dimensional Schubert varieties admitting semi-stable points, Proc. Indian Acad. Sci. (Math.Sci.) 119(4) (2009) 469-485.

\bibitem{Kap1} 
Kapranov, M. M. Chow quotients of Grassmannians-I, I. M. Gelfand Seminar, Adv. Soviet Math. 16, Part 2, Amer. Math. Soc., Providence, 1993, 29-110.

\bibitem{Kap2} 
Kapranov, M. M. Veronese curves and Grothendieck-Knudsen moduli space $\overline{M}_{0,n}$, J. Algebraic Geom. 2 (1993), 239-26.


\bibitem{KL1} 
D. Kazhdan, G. Lusztig, Representations of Coxeter groups and Hecke algebras, Inv. Math.,53 (1979),165-184.

\bibitem{KL2} 
D. Kazhdan, G. Lusztig, Schubert varieties and Poincare duality, Proc. Symp. Pure. Math.,A.M.S., 36 (1980), 185-203.

\bibitem{LL} 
 V. Lakshmibai, P. Littelmann, Equivariant K-theory and Richardson varieties, Journal of Algebra 260 (2003) 230-260.

\bibitem{LR} 
V. Lakshmibai, K. N. Raghavan, (2008). Standard Monomial Theory- Invariant theoretic approach, Encyclopaedia of Mathematical Sciences, 137. Invariant Theory and Algebraic Trasformation Groups, 8. Springer-Verlag, Berlin.

\bibitem{MFK}
D. Mumford, J. Fogarty, F. Kirwan, (1994) Geometric Invariant theory (Third Edition), (Berlin Heidelberg, New York: Springer-Verlag).

\bibitem{New} P. E. Newstead, (1978). Introduction to Moduli Problems and Orbit Spaces, TIFR Lecture Notes.

\bibitem{santosh} S. K. Pattanayak, Minimal Schubert Varieties admitting semistable points for exceptional cases, Comm. Algebra, Vol. 42, no. 9 (2014), 3811-3822.


\bibitem{R} R. W. Richardson, Intersections of double cosets in algebraic groups, Indag. Math., (N.S.), 3 (1992), 69-77.


\bibitem{CSS}
C. S. Seshadri, Quotient spaces modulo reductive algebraic groups, Ann. Math. 95 (1972), 511--556.

\bibitem{spr} 
 T. A. Springer, (2009) Linear algebraic groups, Modern Bikhäuser Classics, Boston, MA: Birkhäuser Boston Inc.

\bibitem{STR} 
 E. Strickland, Quotients of flag varieties by a maximal torus. Math. Z. 234(1), 1-7 (2000).
 
 \bibitem{IF} F. Incitti, (2006) Bruhat order on the involutions of classical Weyl groups. Adv. in Appl. Math. 37, no. 1, 68–111.


\end{thebibliography}
  \end{document}